\makeatletter \@addtoreset{equation}{section}
\newtheorem{theorem}{Theorem}
\newtheorem{lemma}{Lemma}
\newtheorem{corollary}{Corollary}
\newtheorem{definition}{Definition}
\newcommand{\vect}{{\sf{vec}}}
\newcommand{\rt}{{\top }}
\def\bfe{{\mathbf e}}
\newcommand{\qed}{$\hfill{\Box}$} %\hfill{\Box}
\def\rel{{\rm rel}}
\def\u{{\rm U}}
\def\bfe{{\mathbf e}}
\def\ILS{{\mathrm {ILS}}}
\newcommand {\eq} [1] {\begin{equation}\label{#1}}
\newcommand {\en} {\end{equation}}
\newcommand {\R}        {{\mathbb R}}
\newcommand {\mat}      [1] {\left[\begin{array}{#1}}
\newcommand {\rix}          {\end{array}\right]}
\newcommand {\trace}    {\mathop{\rm Tr}\nolimits}
 \font\tenex=cmex10 % math extension
 \newdimen\p@renwd
 \def\bmat#1{\begingroup \m@th
   \setbox\z@\vbox{\def\cr{\crcr\noalign{\kern2\p@\global\let\cr\endline}}%
     \ialign{$##$\hfil\kern2\p@\kern\p@renwd&\thinspace\hfil$##$\hfil
       &&\quad\hfil$##$\hfil\crcr
       \omit\strut\hfil\crcr\noalign{\kern-\baselineskip}%
       #1\crcr\omit\strut\cr}}%
   \setbox\tw@\vbox{\unvcopy\z@\global\setbox\@ne\lastbox}%
   \setbox\tw@\hbox{\unhbox\@ne\unskip\global\setbox\@ne\lastbox}%
   \setbox\tw@\hbox{$\kern\wd\@ne\kern-\p@renwd\left[\kern-\wd\@ne
     \global\setbox\@ne\vbox{\box\@ne\kern2\p@}%
     \vcenter{\kern-\ht\@ne\unvbox\z@\kern-\baselineskip}\,\right]$}%
   \null\;\vbox{\kern\ht\@ne\box\tw@}\endgroup}
\newcommand*{\affaddr}[1]{#1} % No op here. Customize it for different styles.
\newcommand*{\affmark}[1][*]{\textsuperscript{#1}}
\begin{document}

\title{\bf Backward error  and condition number analysis for the indefinite linear least squares problem}
\author{Huai-An Diao\affmark[1]
\thanks{hadiao@nenu.edu.cn, hadiao78@yahoo.com}\\
\and
Tong-Yu Zhou\affmark[1]\affmark[2]
\thanks{
183051056@qq.com}\\
\affaddr{\affmark[1]School of Mathematics and Statistics,
Northeast Normal University, \\
No. 5268 Renmin Street, Chang Chun 130024,
P.R. China.}\\
\affaddr{\affmark[2]Current address: Shenyang No. 108 Middle School, \\
Taiyuan North Road No. 14,
Shenyang  110001, P.R. China.}\\
}
\date{}
\maketitle

\begin{quote}
{\bf Abstract.} In this paper,
we concentrate on the backward error and condition number of the indefinite least squares problem. For the normwise backward error  of the indefinite least square problem, we adopt the linearization method to derive the tight estimations for the exact normwise backward  errors. Using the dual techniques of condition number theory \cite{22.0}, we derive the explicit expressions of the mixed and componentwise condition numbers
 for  the linear function of the solution for the indefinite least squares problem. The tight upper bounds for the derived  mixed and componentwise condition numbers are obtained, which can be estimated efficiently by means of the classical power method for estimating matrix 1-norm \cite[Chapter 15]{Higham2002Book}  during using the QR-Cholesky method \cite{1.0} for solving the indefinite least squares problem. The numerical examples show that the derived condition numbers can give sharp perturbation bound with respect to the interested component of the solution. And the linearization  estimations are effective for the normwise backward errors.
\end{quote}

{\small {\bf Keywords:}
Indefinite least squares;~normwise backward error;~linearization estimate;~componentwise perturbation;~condition number;  ~adjoint operator;~power method.\\

{\bf AMS Subject Classification:} 15A09, 15A12, 65F35.}

%\newpage

\section{Introduction}
\setcounter{equation}{0}

The indefinite least squares (ILS) problem has  the following form:
\begin{equation}\label{eq:ILS}%\label{eq:ILS de}
 \mbox{ILS}:\qquad\min_{x}(b-Ax)^\top\Sigma_{pq}(b-Ax),
\end{equation}
where $ A\in
\R^{m\times n},\, b\in
\R^{m},\, m\geq n $ and the signature matrix
\begin{equation}\label{eq:J}
\Sigma_{pq}=\left( \begin{matrix}
I_{p}&0\\0&-I_{q} \end{matrix} \right),\quad
 p+q=m.
\end{equation}
When $p = 0 $ or $ q = 0 $,  \eqref{eq:ILS} reduces to the linear least squares (LS) problem and the quadratic
form is definite, while for $ pq > 0 $, and then \eqref{eq:ILS} is to minimize a genuinely indefinite
quadratic form. The ILS comes from the total least squares problem \cite{30.0} and $H^\infty$-smoothing in  optimization; see \cite{Hassibi96linearestimation,Sayed96inertiaproperties}  and   references therein. From the first order optimality condition of \eqref{eq:ILS}, the normal equations for \eqref{eq:ILS} is
\begin{equation}\label{eq:normal}
A^\top\Sigma_{pq}Ax=A^\top b.
\end{equation}
Because the Hessian matrix of the quadratic to be minimized in \eqref{eq:ILS} is $2A^\top\Sigma_{pq}A$, it follows that the ILS problem has a unique solution if and only if
\begin{equation}\label{eq:unique cond}
A^\top \Sigma_{pq}A \mbox{ is postive defenite}.
\end{equation}
We will assume throughout this paper that \eqref{eq:unique cond} holds. Note that \eqref{eq:unique cond} implies that $p\geq n$ and $A(1:p,\, 1:n)$ (and hence $A$) has full rank. For a genuinely  ILS we therefore need $m>n$.

For the numerical algorithms for solving ILS, there are two types: Direct method and iterative method. For the direct method which is suitable for small or medium
scale ILS, Chandrasekaran, Gu and Sayed \cite{1.0} proposed QR-Cholesky method, then later a direct method based on the hyperbolic QR was introduced in \cite{5.0}, which was improved by Xu \cite{Xu2004}. Recently Mastronardi and Van Dooren \cite{MastronardiVanDooren2014BIT} proposed a direct solver based on orthogonal transformation of an indefinite symmetric matrix to a matrix in a proper block anti-triangular form. For large scale problem, Liu et al. \cite{Liu2011,LiuZhang2013ILS} introduced preconditioned conjugate gradient method (PCG). Then the block SOR methods was considered for ILS \cite{LiuLiuCalcolo2014}.

In matrix computations, sensitivity analysis is important. Condition number describes the worst case sensitivity of the output data with respect to the perturbations on the input data, see \cite{Cucker2013Book} and references therein. When a problem with large condition numbers, it is usually called {\it ill-posed} \cite{Demmel1987NumerMath}, which means that we cannot trust the computed solution. Backward error is the smallest perturbation of the input data to make the computed solution to be the exact solution of the corresponding perturbed problem mathematically \cite{Higham2002Book}. With backward error and condition number, the forward error of the problem can be bounded by the following rule of thumb \cite[Page 9]{Higham2002Book}
$$
\mbox{ forward error } \lesssim \mbox{ condition number } \times \mbox{ backward error,}
$$
where the inequality, with errors of higher order terms, can be sharp.

For perturbation analysis, normwise perturbation analysis is classical, and measures the perturbations by their norms. The {\it normwise} condition number for a given problem was defined by Rice \cite{Rice} and a general theory of it was established. The normwise condition number measures the input and output data errors \cite{Higham2002Book} using their  norms. However, when the data is badly scaled or sparse, normwise perturbation bounds allow large relative perturbations on small entries and may give over-estimated bounds. Instead of measuring perturbations by norms,   Skeel in \cite{Skeel79} used {\em componentwise} perturbation analysis to investigate the stability  of Gaussian elimination for linear systems.  Since then, the componentwise perturbation analysis has received considerable attentions for many classical problems in numerical linear algebra; see the comprehensive survey \cite{MR1314843} and the references therein. Componentwise perturbation analysis is more suitable since it measures perturbation errors for each component of the input data. For example, in a modern computer, the floating point system is designed to store a real number in the computer. Every stored number should be rounded to the nearest floating point number, which means that we should measure errors for each component of the input data. In componentwise analysis, two types of condition numbers, described as mixed and componentwise were proposed; see \cite{CuckerDiaoWei2007,Gohberg} for details.

The concept of backward error can be tracked to Wilkinson and others; see  \cite[Page 33]{Higham2002Book} for details. Backward error analysis can be used to examine the stability of numerical algorithms in matrix computation. Moreover, backward error can serve as the basis of effective stopping criteria for the iterative method for large scale problems. There have been many works on the backward error analysis for the linear least squares problem \cite{Stewart,Higham,Walden,Karlson,Grcar03optimalsensitivity,Grcar07estimatesof}, the scale total least squares (STLS) problem \cite{Chang}, and the equality constrained least squares (LSE) problem and the least squares problem over a sphere (LSS)~\cite{Cox,Malyshev}. Because the formulae for and bounds on backward errors for least squares problems are expensive to evaluate, the linearization estimate for them was proposed; see  for \cite{Chang,Grcar03optimalsensitivity,HighamHigham1992BackCondStr,LiuXinguoNLAA2012} and references therein. To our best knowledge, there are no works on the normwise backward error for ILS. In this paper, we will introduce the normwise backward error for ILS and derive its linearization estimate.

Let us review some previous works on the perturbations analysis for ILS. For normwise perturbation analysis, we refer to the papers \cite{5.0,Grcar2011,Wang2009ILS} and references therein. Li et al. \cite{LiWangYang2014} considered componentwise perturbation analysis for the solution of ILS and obtained the explicit expressions for the mixed and componentwise condition numbers. In 2015,  Zhou  \cite{Zhou2015} firstly introduced the condition numbers for a linear function of the solution for ILS, and the corresponding condition numbers expressions were derived based on the dual techniques  \cite{22.0}. Also the linearization estimate for the normwise backward error was obtained \cite{Zhou2015}. In 2016, the condition numbers for a linear function of the solution for ILS also named as \lq\lq partial condition numbers for ILS \rq\rq  \cite{LiWang2016} is studied. The explicit expressions for these condition numbers are derived. Also the probabilistic spectral norm estimator and the small-sample statistical condition estimation method are proposed to estimate condition numbers  \cite{LiWang2016}, but the authors do not take account of the numerical method for ILS to reduce the computational complexity of condition estimations. Usually, in the field of condition estimations \cite[Chapter 15]{Higham2002Book} in numerical linear algebra, how to incorporate condition estimations to the numerical method  by utilizing the already computed matrix decompositions is crucial, thus the computational complexity during condition estimations can be reduced when the already computed matrix decompositions are used to devise the algorithms for condition estimations.

In this paper, we study the sensitivity  of a linear function of the ILS solution $x$ to perturbations on the date $A$  and $ b$, which is defined as
\begin{equation}\label{eq:g dfn}
g(A,b)=L^\top(A^\top\Sigma_{pq}A)^{-1}A^\top\Sigma_{pq}b,
\end{equation}
where $L$ is an $n$-by-$k$, $k \le n$, matrix introduced for
the selection of the solution components. For example, when
$L = I_n$ ($k=n$), all the $n$ components of the solution $x$
are equally selected. When $L = e_i$ ($k=1$), the $i$th unit
vector in $\R^n$, then only the $i$th component of the
solution is selected. In the reminder of this paper, we always suppose that $L$ is not numerically perturbed.

One objective of this paper is to obtain the explicit expressions for componentwise and mixed condition numbers of the linear function of the solution when perturbations on data are measured componentwise and the perturbations on the solution are measured either componentwise or normwise by means of the dual techniques \cite{22.0}. In particular, as also mentioned in \cite{22.0}, the dual techniques enable us to derive condition numbers by maximizing a linear function over a space of smaller dimension than the data space. The methodology to deduce condition numbers expressions is different with that in ~\cite{LiWang2016}. And tight upper bounds also are obtained, which can be estimated efficiently via the classical power method \cite[Chapter 15]{Higham2002Book} during using the QR-Cholesky method \cite{1.0} for solving ILS by means of utilizing the already computed matrix decompositions to reduce the computational complexity of condition estimations. We should point out that the proposed condition estimations are deterministic, which is different with probabilistic or statistical condition estimations ~\cite{LiWang2016}.

Another objective of this paper is to  introduce the normwis backward error for ILS. To our best knowledge, there have been no papers on this subject. Because the normwis backward error for ILS is a nonlinear optimization problem, which usually is not easy to derive its explicit expression, we study the linearization estimate for it and the explicit expression for the linearization estimate is obtained in Section \ref{sec:backward error}.

The paper is organized as follows. We firstly derive the linearization estimate of the normwise backward error for ILS in Section \ref{sec:backward error}. In Section \ref{sec:cond}, the dual techniques for deriving condition number  \cite{22.0} is reviewed and applied to ILS, then we propose the power method \cite[Chapter 15]{Higham2002Book}  to estimate tight bounds for the mixed and componentwise condition numbers for the linear function of the solution of ILS by taking account of the already computed matrix decompositions during solving ILS by means of the QR-Cholesky method \cite{1.0}. We do some numerical examples to show the effectiveness of the proposed condition numbers and linearization estimate for the normwise backward error in Section \ref{sec:nume ex}. At end, in Section \ref{sec:con} concluding remarks are drawn.

\section{Linearization estimate of the normwise backward error for ILS}\label{sec:backward error}

In this section we will study the linearization estimate for the normwise backward error for ILS. Suppose $y$ is an approximate solution of \eqref{eq:ILS} . we consider the set of perturbations
\begin{equation}\label{eq:xi}
\xi_{\ILS}=\{(\Delta A,\, \Delta b)|(A+\Delta A)^\top\Sigma_{pq}(b+\Delta b-(A+\Delta A)y)=0\}
\end{equation}
We consider the normwise backward error of $y$ defined as follows:
\begin{equation}\label{eq:xhwu}
\mu=\min_{(\Delta A,\Delta b)\in\xi_{\ILS}}\|[\Delta A,\, \theta \Delta b]\|_{F},
\end{equation}
where $\|\cdot\|_F$ is Frobenius norm, and $\theta$  is a positive parameter to balance the norm weight between $\Delta A$ and $\Delta b$. It is not difficult to see that the perturbation equation in $\xi_{\ILS} $ can be rewritten as
\begin{equation}\label{eq:rewritten}
J_{\ILS}\begin{pmatrix}
\vect(\Delta A)\\\theta \Delta b \end{pmatrix}=-A^\top\Sigma_{pq}r_y+\Delta A^\top\Sigma_{pq}(\Delta Ay-\Delta b),
  \end{equation}
where
\[
J_{\ILS}=\left(I_{n} \otimes r_y^\top \Sigma_{pq}- A^\top\Sigma_{pq}(y^\top \otimes I _{m}),\, \theta^{-1}A^\top\Sigma_{pq}\right),
\]
$r_y=b-Ay$ and $\vect(A)$ denotes the vector obtained by stacking
the columns of a matrix $A$,  $\otimes$ is the Kronecker
product operator \cite{Graham81}.

Since $\mu$ is the minimizer of a nonlinear optimization problem and in general it is not easy to derive its explicit expression. We will introduce $\bar{\mu}$ of $\mu$ to estimate it, which is a linearization estimate of $\mu$ defined as
\begin{equation}\label{eq:xhwugj}
 \bar{\mu}=\|J_{\ILS}^{\dagger}A^\top\Sigma_{pq}r_y\|_2,
\end{equation}
where $\|\cdot\|_2$ is 2-norm of a vector or spectral norm of a matrix. If $r_y\neq 0$, then $J_{\ILS} $ has full row rank. Let
\[
\psi(\Delta A,\Delta b)= \Delta A^\top\Sigma_{pq}(\Delta Ay-\Delta b)=\Delta A^\top\Sigma_{pq} (\Delta A,\theta \Delta b)\left(\begin{matrix}
y\\-\theta^{-1}\end{matrix}\right).
 \]
We have
 \begin{align}\label{eq:psi ef}
 \|\psi(\Delta A,\Delta A)\|_2 &\leq\|\Delta A^\top\Sigma_{pq}\|_2\|(\Delta A,\theta \Delta b)\|_F\sqrt{\theta^{-2}+\|y\|_2^{2}}\\
  &\leq\|\Delta A^\top\|_2\|[\Delta A,\theta \Delta b]\|_F\sqrt{\theta^{-2}+\|y\|_2^{2}} \nonumber \\
  &\leq\|[\Delta A,\theta \Delta b ]\|_{F}^{2}\eta_{1}, \nonumber
  \end{align}
 where $\eta_{1}=\sqrt{\theta^{-2}+\|y\|^{2}}$. The following theorem shows that $ \bar{\mu}$ is a sharp estimate of  $\mu$, provided that  $\bar{\mu}$  is sufficiently small.

\begin{theorem}
If  $ 4\eta_{1}\|J_{\ILS}^{\dagger}\|\bar{\mu}<1$, then
\[
\frac{2}{1+\sqrt{2}}\,\bar{\mu} \leq\mu \leq 2\bar{\mu},
\]
where $\eta_{1}=\sqrt{\theta^{-2}+\|y\|^{2}}.$
\end{theorem}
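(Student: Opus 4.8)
The plan is to derive, from the feasibility identity and the quadratic remainder bound \eqref{eq:psi ef}, a single scalar quadratic inequality linking $\mu$ and $\bar\mu$, and then to read off both inequalities from the roots of that quadratic. Throughout I write $z = (\vect(\Delta A)^\top,\, \theta\Delta b^\top)^\top$, so that $\norme{z} = \normf{[\Delta A,\,\theta\Delta b]}$, and I set $c = A^\top\Sigma_{pq}r_y$ and $\beta = \eta_1\norme{J_{\ILS}^\dagger}$. In this notation the constraint defining $\xi_{\ILS}$ in \eqref{eq:rewritten} reads $J_{\ILS}z = -c + \psi(\Delta A,\Delta b)$, the linearization estimate is $\bar\mu = \norme{J_{\ILS}^\dagger c}$, and \eqref{eq:psi ef} gives $\norme{\psi}\le\eta_1\norme{z}^2$. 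Since $r_y\neq 0$ makes $J_{\ILS}$ full row rank, I may use $J_{\ILS}J_{\ILS}^\dagger = I$ and $\norme{J_{\ILS}^\dagger J_{\ILS}}\le 1$; these two facts are exactly what make the reformulation invertible from both sides.

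For the lower bound $\tfrac{2}{1+\sqrt2}\bar\mu\le\mu$, I would start from an arbitrary feasible $z$, substitute $c=-J_{\ILS}z+\psi$ into $\bar\mu=\norme{J_{\ILS}^\dagger c}$, and apply the triangle inequality together with $\norme{J_{\ILS}^\dagger J_{\ILS}}\le1$ and $\norme{\psi}\le\eta_1\norme{z}^2$ to obtain
$$\bar\mu \le \norme{z} + \beta\norme{z}^2 .$$
Specializing $z$ to the minimizer, so that $\norme{z}=\mu$, yields $\beta\mu^2+\mu-\bar\mu\ge0$, hence $\mu\ge 2\bar\mu/(1+\sqrt{1+4\beta\bar\mu})$. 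The hypothesis $4\beta\bar\mu<1$ gives $\sqrt{1+4\beta\bar\mu}<\sqrt2$, which delivers the stated constant.

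For the upper bound $\mu\le2\bar\mu$, I would exhibit a concrete feasible perturbation through a fixed-point argument. Define $T(z)=J_{\ILS}^\dagger(-c+\psi(z))$; because $J_{\ILS}J_{\ILS}^\dagger=I$, every fixed point of $T$ lies in $\xi_{\ILS}$. On the closed ball of radius $\rho_-=(1-\sqrt{1-4\beta\bar\mu})/(2\beta)$, the self-map bound $\norme{T(z)}\le\bar\mu+\beta\rho_-^2=\rho_-$ follows from $\norme{\psi}\le\eta_1\norme{z}^2$ and the fact that $\rho_-$ is the smaller root of $\beta\rho^2-\rho+\bar\mu=0$; rewriting $\rho_-=2\bar\mu/(1+\sqrt{1-4\beta\bar\mu})$ shows $\bar\mu<\rho_-<2\bar\mu$. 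The contraction property needs a Lipschitz bound for the quadratic map $\psi$, which I would get by splitting $\psi(z_1)-\psi(z_2)$ via the bilinear structure $\psi(\Delta A,\Delta b)=\Delta A^\top\Sigma_{pq}(\Delta A,\theta\Delta b)(y^\top,-\theta^{-1})^\top$ into two terms and bounding each by $\eta_1\rho_-\norme{z_1-z_2}$, giving $\norme{T(z_1)-T(z_2)}\le 2\beta\rho_-\norme{z_1-z_2}$ with $2\beta\rho_-=1-\sqrt{1-4\beta\bar\mu}<1$. Banach's fixed-point theorem then supplies $z^\star\in\xi_{\ILS}$ with $\norme{z^\star}\le\rho_-<2\bar\mu$, so $\mu\le2\bar\mu$.

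The main obstacle is precisely the Lipschitz estimate for $\psi$ on the ball: unlike the one-sided bound \eqref{eq:psi ef}, it requires the add-and-subtract expansion of $\psi(z_1)-\psi(z_2)$ and a uniform control of both resulting bilinear factors over the ball of radius $\rho_-$. Once that estimate is secured, checking that the single hypothesis $4\beta\bar\mu<1$ simultaneously guarantees the self-map condition, the contraction constant $2\beta\rho_-<1$, and the comparison $\rho_-<2\bar\mu$ is a routine manipulation of the smaller root of the governing quadratic.
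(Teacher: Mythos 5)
Your proposal is correct, and its lower-bound half is exactly the paper's argument: premultiply the feasibility identity \eqref{eq:rewritten} by $J_{\ILS}^{\dagger}$, use $\|J_{\ILS}^{\dagger}J_{\ILS}\|_2\le 1$ together with \eqref{eq:psi ef} to get $\bar{\mu}\le \mu+\beta\mu^{2}$ with $\beta=\eta_{1}\|J_{\ILS}^{\dagger}\|_2$, and invert the quadratic using $4\beta\bar{\mu}<1$ (the paper compresses this last step into one line, but it is the same root manipulation you spell out). The genuine difference is in the upper bound. The paper works on precisely your ball of radius $\xi_{*}=2\bar{\mu}/\bigl(1+\sqrt{1-4\beta\bar{\mu}}\bigr)$ (your $\rho_{-}$) and verifies the same self-map estimate $\bar{\mu}+\beta\xi_{*}^{2}=\xi_{*}$, but then invokes the \emph{Brouwer} fixed-point theorem: continuity of $\Gamma$ plus invariance of the closed bounded convex set $S$ already yields a fixed point, so no Lipschitz estimate is ever needed. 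You instead use \emph{Banach's} contraction theorem, which obliges you to prove the add-and-subtract bilinear bound $\|\psi(z_{1})-\psi(z_{2})\|_2\le 2\eta_{1}\rho_{-}\|z_{1}-z_{2}\|_2$ on the ball; your splitting of $\psi(z_{1})-\psi(z_{2})$ into the two bilinear terms is sound, each factor is correctly controlled by $\eta_{1}\rho_{-}$, and the resulting contraction constant $2\beta\rho_{-}=1-\sqrt{1-4\beta\bar{\mu}}<1$ holds under exactly the theorem's hypothesis, so nothing is lost. The trade-off: your route costs the extra Lipschitz computation but buys uniqueness of the feasible perturbation within $S$ and a constructive, convergent fixed-point iteration for producing it; the paper's Brouwer argument is shorter and dispenses with the contraction estimate entirely. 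Both arguments conclude $\mu\le\rho_{-}<2\bar{\mu}$ when $\bar{\mu}>0$ (and trivially $\mu\le 2\bar{\mu}$ when $\bar{\mu}=0$, where $\rho_{-}=\bar{\mu}$ and the strict inequalities degenerate — a harmless edge case shared by both write-ups).
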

\begin{proof}
Since $J_{\ILS}$  has full row rank, premultiplying both sides of \eqref{eq:rewritten}  by $J_{\ILS}^{\dagger} $  gives
\begin{equation}
J_{\ILS}^{\dag}J_{ILS}\begin{pmatrix}
\vect(\Delta A)\\\theta \Delta b \end{pmatrix} =-J_{\ILS}^{\dag}A^\top\Sigma_{pq}r_y+J_{\ILS}^{\dag}\Delta A^\top\Sigma_{pq}(\Delta Ay-\Delta b).
 \end{equation}
This combined with \eqref{eq:psi ef}  implies that
\begin{align*}
\bar{\mu}&\leq \|[\Delta A,\theta \Delta A]\|_{F}+\|J^{\dag}\psi(\Delta A,\Delta b)\|_2\leq\|[\Delta A,\theta \Delta b]\|_{F}+\eta_{1}\|J_{\ILS}^{\dagger}\|\|[\Delta A,\theta \Delta b]\|_{F}^{2}.
\end{align*}
From this inequality and applying the assumption, we obtain that
\[
\mu \geq \frac{2}{1+\sqrt{2}}\bar{\mu}.
\]
In order to prove $\mu \leq 2\bar{\mu}$, we consider the following nonlinear system:
\begin{equation}\label{eq:written}
 \left(\begin{matrix}
\vect(\Delta A)\\\theta \Delta b \end{matrix}\right )=-J_{\ILS}^{\dagger}A^\top\Sigma_{pq}r_y+J_{\ILS}^{\dagger}\Delta A^\top\Sigma_{pq}(\Delta Ay-\Delta b).
\end{equation}
Note that $J_{\ILS}J_{\ILS}^{\dagger}=I$, which tells us that any solution of  \eqref{eq:written}  is also a solution of  \eqref{eq:rewritten} .
Consider the following mapping:
\[
\Gamma:\left(\begin{matrix}
\vect(\Delta A)\\\theta \Delta b \end{matrix} \right)\rightarrow -J_{\ILS}^{\dagger}A^\top\Sigma_{pq}r_y+J_{\ILS}^{\dagger}\Delta A^\top\Sigma_{pq}(\Delta Ay-\Delta b).
\]
It is not difficult to see that $ \Gamma $ is a continuous mapping from  $ \R^{mn+m}$ to $\R^{mn+m}$. Let
\begin{align*}
\xi_{*}&=\frac{2\bar{\mu}}{1+\sqrt{1-4\eta_{1}\|J_{\ILS}^{\dagger}\|\bar{\mu}}},\\
S&=\left\{\left(\begin{matrix}
\vect(\Delta A)\\\theta \Delta b \end{matrix}\right )\mid \Delta A\in \R^{m\times n},\, \Delta b\in \R^{m},\|[\Delta A,\, \theta \Delta b]\|_{F}\leq \xi_{*}\right\}.
\end{align*}
Obviously, the set $ S $ is bounded, closed and convex. Furthermore, for any
$$
\left(\begin{matrix}
\vect(\Delta A)\\\theta \Delta b \end{matrix}\right )\in S
$$
we can deduce that
\[
\left\|\Gamma\left(\begin{matrix}
\vect(\Delta A)\\\theta \Delta b \end{matrix} \right)\right\|_2\leq\bar{\mu} +\eta_{1} \|J_{\ILS}^{\dagger}\|\left\|\left(\begin{matrix}
\vect(\Delta A)\\\theta \Delta b \end{matrix} \right)\right\|_2^{2}\leq\xi_{*}.
\]
By the Brouwer fixed-point theorem, the mapping  $\Gamma $  has  a fixed point
$$
\begin{pmatrix}
\vect(\Delta A_{*})\\\theta \Delta b_{*} \end{pmatrix} \in S,
$$
and thus the system \eqref{eq:written} has a solution in S. Hence,
\[
\mu \leq \left\|\begin{pmatrix}
\vect(\Delta A_{*})\\\theta \Delta b_{*}\end{pmatrix}\right\|_2\leq \xi_{*}<2\bar{\mu}.
\]
\qed %\in S
 \end{proof}

\section{Condition numbers for ILS}\label{sec:cond}

In this section we will derive the explicit condition numbers expressions for a linear function of the solution of ILS by means of the dual techniques \cite{22.0} under componentwise perturbations. Also tight upper bounds for the mixed and componentwise condition numbers are obtained, which can be estimated efficiently through the classical power method \cite[Chapter 15]{Higham2002Book}  by taking account of the already computed decomposition of the QR-Cholesky method \cite{1.0} for solving ILS.

\subsection{Dual techniques}

For the Euclidean spaces $E$  and  $G$  equipped scalar products $\langle \cdot ,\cdot \rangle_{E}$ and  $\langle\cdot,\cdot \rangle_{G}$ respectively, let a linear operator $ J:\, E \rightarrow G $ be well defined. Denote the corresponding norm norms $\|\cdot\|_{E} $ and $\|\cdot\|_{G}$ respectively. The well-known adjoint operator and dual norm are defined as follows.

\begin{definition}\label{def:adj dual}
The adjoint operator of  $J$,\, $J^{\ast}:G \rightarrow E $  is defined by
\[
\langle y,Jx \rangle_{G}=\langle J^{\ast}y,x\rangle_{E}
\]
\end{definition}
where $x\in E $ and $y \in G $.  The dual norm $ \|\cdot\|_{E^{\ast}}$  of  $ \|\cdot\|_{E} $  is defined by
\[
\|x\|_{E{\ast}}=\max_{u\neq0}\frac{ \langle  x,u \rangle _{E}}{\|u\|_{E}}
\]
and the dual norm $\|.\|_{G^{\ast}}$ can be defined similarly.

For the common vector norms with respect to the canonical scalar product in $ \R^{n} $, their dual norms  are  are given by :
\[
\|\cdot \|_{{1}\ast}=\|\cdot\|_{\infty}, \quad  \|\cdot \|_{{\infty}\ast}=\|\cdot\|_{1}\quad \mbox{and } \quad \|\cdot\|_{{2}\ast}=\|\cdot\|_{2}.
\]

For the matrix norms in $\R^{m\times n}$   with respect to the scalar product $ \langle A,B\rangle = \trace(A^\top B)$, where $\trace(A)$ is the trace of $A$
, we have  $ \|A\|_{F}\ast=\|A\|_{F}$ since  $\trace(A^\top A)=\|A\|_{F}^{2}$.

For the linear operator from $E$  to $G$, let $\|\cdot\|_{E,G}$  be the operator norm induced by
the norms  $\|\cdot\|_{E}$ and  $\|\cdot\|_{G}$. Consequently, for linear operators from $G$ to $E$, the norm
induced from the dual norms $\| \cdot \|_{E\ast}$ and
$\| \cdot \|_{G\ast}$, is denoted by $\| \cdot \|_{G\ast,E\ast}$.

%Likewise ,the norm  $\|.\|_{E\ast,G\ast}$
% is the operator norm for linear applications mapping  $G$ to  $E$
%and induced by the dual norms $\|.\|_{G}\ast$ and  $\|.\|_{E}\ast$ .Then we have the following lemma.

We have the following  result for the adjoint operators and dual norms \cite{22.0}.
\begin{lemma}\label{lemma:adjoint}
$$
\|J\|_{E,G}=\|J^{\ast}\|_{G\ast,E\ast}.
$$
\end{lemma}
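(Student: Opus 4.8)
The plan is to prove $\|J\|_{E,G}=\|J^{\ast}\|_{G\ast,E\ast}$ by establishing the two inequalities separately, using the variational characterization of the operator norm together with the definition of the dual norm. First I would unwind the definitions. By definition of the induced operator norm, $\|J\|_{E,G}=\max_{x\neq 0}\|Jx\|_{G}/\|x\|_{E}$, and similarly $\|J^{\ast}\|_{G\ast,E\ast}=\max_{y\neq 0}\|J^{\ast}y\|_{E\ast}/\|y\|_{G\ast}$. The crucial tool is the \emph{biduality} relation: for any Euclidean space with a dual norm, one has $\|u\|_{E}=\max_{x\neq 0}\langle u,x\rangle_{E}/\|x\|_{E\ast}$, i.e.\ the dual of the dual norm recovers the original norm (this holds because the spaces are finite-dimensional Euclidean spaces). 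Using this, the operator norm of $J$ can be rewritten as a double maximum over the unit spheres.

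The heart of the argument is the chain of equalities
\begin{align*}
\|J\|_{E,G}
&=\max_{\|x\|_{E}=1}\|Jx\|_{G}
=\max_{\|x\|_{E}=1}\ \max_{\|y\|_{G\ast}=1}\langle y,Jx\rangle_{G}\\
&=\max_{\|y\|_{G\ast}=1}\ \max_{\|x\|_{E}=1}\langle J^{\ast}y,x\rangle_{E}
=\max_{\|y\|_{G\ast}=1}\|J^{\ast}y\|_{E\ast}
=\|J^{\ast}\|_{G\ast,E\ast}.
\end{align*}
Here the second equality applies the biduality relation in $G$ (expressing $\|Jx\|_{G}$ as a maximum over $y$ in the dual unit sphere), the third equality is simply the definition of the adjoint operator from Definition~\ref{def:adj dual}, the fourth equality applies the definition of the dual norm $\|\cdot\|_{E\ast}$ in $E$, and the final equality is the definition of the induced norm of $J^{\ast}$. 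The interchange of the two suprema in the middle step is legitimate because the objective $\langle y,Jx\rangle$ is jointly continuous and each maximum is taken over a compact set; no inequality is lost in either direction, which is precisely what yields equality rather than merely one bound.

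I would carry out the steps in that order: establish the biduality identity, rewrite $\|Jx\|_{G}$ via duality, swap the order of maximization, invoke the adjoint definition, and collapse back to $\|J^{\ast}\|_{G\ast,E\ast}$. The main obstacle is justifying the biduality relation $\|u\|_{E}=\max_{x\neq 0}\langle u,x\rangle_{E}/\|x\|_{E\ast}$, since everything hinges on the dual of the dual norm returning the original norm; this is where one needs finite-dimensionality (or reflexivity) and a separating-hyperplane or Hahn--Banach type argument to guarantee that the supremum defining the dual norm is attained and that no gap opens up. Once biduality is in hand, the proof is a clean symmetric computation, and the symmetry of the argument under interchanging $(E,\|\cdot\|_{E})$ with $(G,\|\cdot\|_{G\ast})$ automatically delivers both inequalities at once, so equality is immediate.
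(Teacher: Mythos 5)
Your proposal is correct, but there is nothing in the paper to compare it against: the paper states this lemma without proof, quoting it directly from Baboulin and Gratton \cite{22.0}. Your argument --- rewrite $\|Jx\|_{G}$ via biduality, exchange the two maximizations, apply Definition~\ref{def:adj dual}, and recognize the dual norm of $J^{\ast}y$ --- is the standard proof of this fact and is essentially what underlies the cited reference. Two small refinements: the interchange of the two maximizations needs no compactness or minimax argument, since a supremum of a supremum over a product set can always be exchanged (compactness is only needed if you insist that the suprema are attained, which is automatic here since the unit spheres are compact and the objective is continuous); and the one genuinely substantive ingredient is, as you say, the biduality identity $\|u\|_{E}=\max_{x\neq 0}\langle u,x\rangle_{E}/\|x\|_{E\ast}$, which in the finite-dimensional Euclidean setting of the paper follows from a separating-hyperplane argument (or directly from the fact that the dual of the dual norm is the original norm in $\R^{n}$). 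One pedantic point worth noting: the paper's definition of the dual norm omits absolute values in the numerator, but since the maximum runs over all nonzero $u$ and one may replace $u$ by $-u$, this agrees with the usual definition, so your chain of equalities goes through verbatim.
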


%\begin{proof}
%\begin{align}
%\|J\|_{E,G}&=\max_{x\in E}\frac{\|Jx\|_{G}}{\|x\|_{E}}\\
%&=\max_{x\in E,u\in G}\frac{\langle Jx,u\rangle_{G}}{\|u\|_{G\ast}\|x\|_{E}}\nonumber\\
%&=\max_{u\in G} \frac{1}{\|u\|_{G}\ast}\max_{x\in E}\frac{\langle x,J^{\ast}u\rangle_{E}}{\|x\|_{E}}\nonumber\\
%&=\max_{u\in G}\frac{\|J^{\ast}u\|_{E\ast}}{\|u\|_{G}\ast}\nonumber\\
%&=\|J^{\ast}\|_{G\ast,E\ast}\nonumber
%\end{align}
%\end{proof}

As mentioned in \cite{22.0}, it may be desriable  to compute $ \|J^{\ast}\|_{G\ast,E\ast} $ instead of $\|J\|_{E,G}$
when the dimension of the Euclidean space $G^{\ast}$  is  lower  than  $E$ because it implies a maximization over a space of smaller dimension.

Now, we consider a product space  $ E=E_{1}\times \cdots\times E_{p} $  where each Euclidean space  $E_{i}$ is equipped with the scalar product  $\langle\cdot,\cdot \rangle_{E_{i}}$ and the corresponding norm $\|\cdot\|_{E_{i}}$. In  $E$, we define the following scalar product
\[
\langle (x_{1},\cdots,x_{p}),(y_{1},\cdots,y_{p})\rangle=\langle x_{1},y_{1}\rangle_{E_{1}}+\cdots+\langle x_{p},y_{p}\rangle_{E_{p}},
\]
and the corresponding product norm
\[
\|(x_{1},\cdots,x_{p})\|_{v}=v(\|x_{1}\|_{E_{1}},\cdots,\|x_{p}\|_{E_{p}}),
\]
where   $v$  is an absolute norm on  $\R^{p}$, that is  $ v(|x|)=v(x)$, for any  $x\in \R^{p}$; see \cite{Higham2002Book} for details. We denote   $v_{\ast}$ is the dual norm of $v$
with respect to the canonical inner-product of $ \R^{p} $  and we are interested in determining the dual $ \|\cdot\|_{v\ast}$ of the
product norm $\|\cdot\|_{v}$  with respect to the scalar product of  $E$. The following result can be found in \cite{22.0}.

\begin{lemma}\label{lemmaProductNorm}
The dual of the product norm can be expressed by
\[
\|(x_{1},\cdots,x_{p})\|_{v\ast}=v(\|x_{1}\|_{E_{1\ast}},\cdots,\|x_{p}\|_{E_{p\ast}}).
\]
\end{lemma}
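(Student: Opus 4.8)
The plan is to evaluate the dual product norm $\|(x_{1},\dots,x_{p})\|_{v\ast}$ straight from its definition as a supremum of the pairing over $E$, and then to collapse that supremum over the whole product space into a maximization over $\R^{p}$. Starting from the definition of $\|\cdot\|_{v\ast}$ together with the scalar product and the product norm on $E$, I would write
\[
\|(x_{1},\dots,x_{p})\|_{v\ast}=\max_{(u_{1},\dots,u_{p})\neq 0}\frac{\sum_{i=1}^{p}\langle x_{i},u_{i}\rangle_{E_{i}}}{v(\|u_{1}\|_{E_{1}},\dots,\|u_{p}\|_{E_{p}})}.
\]
The argument then establishes two matching one-sided bounds, both with common value $v(\|x_{1}\|_{E_{1\ast}},\dots,\|x_{p}\|_{E_{p\ast}})$.

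For the upper bound I would invoke the componentwise dual-norm inequality $\langle x_{i},u_{i}\rangle_{E_{i}}\le\|x_{i}\|_{E_{i\ast}}\|u_{i}\|_{E_{i}}$ in each summand of the numerator. Writing $a_{i}=\|x_{i}\|_{E_{i\ast}}\ge 0$ and $t_{i}=\|u_{i}\|_{E_{i}}\ge 0$, the quotient is bounded by $\big(\sum_{i}a_{i}t_{i}\big)/v(t_{1},\dots,t_{p})$, which depends on the $u_{i}$ only through the nonnegative vector $t=(t_{1},\dots,t_{p})$. Thus the supremum reduces to the purely scalar maximization $\max_{t\neq 0}(\sum_{i}a_{i}t_{i})/v(t)$ over $\R^{p}$. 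Because $v$ is absolute, one may restrict to $t\ge 0$ without loss, and the value of this scalar maximum is precisely $v(a_{1},\dots,a_{p})=v(\|x_{1}\|_{E_{1\ast}},\dots,\|x_{p}\|_{E_{p\ast}})$, the right-hand side of the lemma. This gives the inequality $\le$.

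For the reverse inequality I would construct an (almost) optimal tuple. For each $i$ with $x_{i}\neq 0$ pick, up to an $\varepsilon$, a direction $w_{i}\in E_{i}$ with $\|w_{i}\|_{E_{i}}=1$ realizing the dual pairing $\langle x_{i},w_{i}\rangle_{E_{i}}=\|x_{i}\|_{E_{i\ast}}$ (such $w_{i}$ exists by the very definition of $\|\cdot\|_{E_{i\ast}}$); then rescale by setting $u_{i}=t_{i}^{\star}w_{i}$, where $t^{\star}\ge 0$ is a maximizer of the scalar problem. With this choice every componentwise inequality becomes an equality up to $\varepsilon$, so the quotient attains the scalar maximum up to $\varepsilon$; letting $\varepsilon\to 0$ yields $\ge$. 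The degenerate situations ($x_{i}=0$ for some $i$, or all $x_{i}=0$) I would dispose of by taking the corresponding $u_{i}=0$.

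The main obstacle is the scalar reduction together with the evaluation of the resulting scalar maximum. The reduction is plausible but needs care: one must argue that the direction of each $u_{i}$ can be chosen independently to align with $x_{i}$ (using the direction freedom and the fact that $v$ depends on the lengths $t_{i}$ only, i.e.\ is absolute), so that the outer supremum genuinely splits into a choice of directions followed by a choice of a length vector $t$. The truly delicate point is the attainment step, namely guaranteeing a single $t^{\star}$ that simultaneously solves the scalar maximization and remains compatible with the componentwise equalities, and identifying this scalar maximum with the right-hand side $v(\|x_{1}\|_{E_{1\ast}},\dots,\|x_{p}\|_{E_{p\ast}})$ of the lemma. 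Verifying the boundary and degenerate cases is routine but must be carried out to make the supremum manipulations rigorous.
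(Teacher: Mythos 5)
Your skeleton is the standard direct argument, and since the paper offers no proof of this lemma at all (it simply cites \cite{22.0}), the structure itself cannot be faulted: the upper bound via $\langle x_i,u_i\rangle_{E_i}\le\|x_i\|_{E_{i\ast}}\|u_i\|_{E_i}$, the reduction to a scalar maximization over $t=(\|u_1\|_{E_1},\dots,\|u_p\|_{E_p})$ using absoluteness of $v$, and the lower bound by aligning each $u_i$ with a dual-attaining direction $w_i$ and rescaling by a maximizer $t^\star$ are all legitimate. (Two minor points: in finite dimensions all suprema here are attained by compactness, so your $\varepsilon$-machinery and your worry about ``attainment'' are unnecessary; and restricting to $t\ge 0$ is justified because $a\ge 0$ and $v$ absolute, hence monotone.)

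However, the decisive step is wrong. You claim that
\[
\max_{t\neq 0}\frac{\sum_{i=1}^{p}a_i t_i}{v(t)}=v(a_1,\dots,a_p).
\]
By the very definition of the dual norm on $\R^p$ with respect to the canonical inner product, this maximum equals $v_\ast(a)$, the \emph{dual} of $v$ evaluated at $a$, not $v(a)$; the two agree only for self-dual norms such as the $2$-norm. A concrete counterexample: with $v=\|\cdot\|_1$ and $a=(1,1)$, the maximum is $\|a\|_\infty=1$, whereas $v(a)=2$. What your argument actually establishes is $\|(x_1,\dots,x_p)\|_{v\ast}=v_\ast(\|x_1\|_{E_{1\ast}},\dots,\|x_p\|_{E_{p\ast}})$ --- and that is the correct statement from \cite{22.0}: the lemma as printed in this paper contains a typo ($v$ on the right-hand side should be $v_\ast$), as the paper's own subsequent application (\ref{eqnDualC}) confirms, where $v=\|\cdot\|_\infty$ yields the $1$-norm $\|\cdot\|_{\infty\ast}=\|\cdot\|_1$ of the vector of componentwise dual norms. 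So you should not have forced the scalar maximum to equal $v(a)$ in order to match the printed statement; replacing that identification by $v_\ast(a)$ repairs the proof and simultaneously corrects the lemma.
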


%\subsection{Application to condition numbers}

After introducing the necessary background in adjoint operators
and dual norms, we apply them to the condition numbers for ILS. We can view the Euclidean space $E$ with
norm $\| \cdot \|_E$ as the space of the input data in ILS and $G$ with norm $\| \cdot \|_G$ as the space
of the solution in ILS. Then
the function $g$ in (\ref{eq:g dfn}) is an operator from $E$
to $G$ and the condition number is the measurement of the
sensitivity of $g$ to the perturbation in its input data.

From \cite{Rice}, if $g$ is Fr{\'e}chet differentiable in neighborhood of  $y \in E$, then the absolute condition number of  $g$ at $y\in E$  is given by
\[
\kappa(y)=\|g'(y)\|_{E,G}=
\max_{\| z \|_E = 1} \| g'( y ) \cdot z \|_G ,
\]
where $ \|\cdot\|_{E,G} $  is the operator norm induced by the norms  $ \|\cdot\|_{E} $ and  $ \|\cdot\|_{G}$ and $g'( y )$ is the Fr{\'e}chet derivative of $g$ at $y$. If $ g(y) $ is nonzero, the \emph{relative condition number} of  $g$ at  $y\in E$ is defined as
\[
\kappa^{\rel}(y)=\kappa(y)\frac{\|y\|_{E}}{\|g'(y)\|_{G}}.
\]
The expression of  $\kappa(y)$  is related to the operator norm of the linear operator  $g'(y)$. Applying
Lemma~\ref{lemma:adjoint}, we have the following expression of
$\kappa(y)$ in terms of adjoint operator and dual norm:
\begin{equation} \label{eqnK}
 \kappa(y)=\max_{\|\Delta y\|_{E}=1} \|g'(y)\cdot \Delta y\|_{G}=\max_{\|z\|_{G\ast}=1}\|g'(y)^{\ast}\cdot z\|_{E\ast}.
\end{equation}

Now we consider the componentwise metric on a data space $ E=\R^{n}$. For any given $ y\in E$,  the subset $ E_{y} \in E$ is a set of all elements $ \Delta y\in E$ satisfying that $\Delta y_{i}=0$  whenever  $y_{i}=0$, $1\leq i\leq n$. Thus in a componentwise perturbation analysis, we measure the perturbation  $\Delta y\in E_{y}$ of  $y$  using the following componentwise norm with respect to $y$
\begin{equation}\label{eq:rr}
 \|\Delta y\|_{c}=\min\{\omega,|\Delta y_{i}|\leq \omega |y_{i}|,i=1,\ldots, n\}.
\end{equation}
Equivalently, it is easy to see that
\begin{equation}\label{eqnCNorm}
\|\Delta y\|_{c}=\max\left\{\frac{|\Delta y_{i}|}{|y_{i}|},y_{i}\neq 0\right\}=\left\|\left(\frac{|\Delta y_{i}|}{y_{i}}\right)\right\|_{\infty},
\end{equation}
where $y \in E_y$. %Note that $ \|\cdot\|_{c} $  is called the componentwise relative norm .

In the following we consider the dual norm $\| \cdot \|_{c\ast}$
of the componentwise norm $\| \cdot \|_c$.
Let the product space $E$
be $\mathbb{R}^n$, each $E_i$ be $\mathbb{R}$,
and the absolute norm $v$ be $\| \cdot \|_{\infty}$.
Setting the norm $\| \Delta y_i \|_{E_i}$ in $E_i$ to
$| \Delta y_i | / | y_i |$ when $y_i \neq 0$, from
Definition~\ref{def:adj dual}, we have the dual norm
\[
\| \Delta y_i \|_{E_i\ast} =
\max_{z \neq 0} \frac{| \Delta y_i \cdot z |}{\| z \|_{E_i}} =
\max_{z \neq 0} \frac{| \Delta y_i \cdot z |}{|z| / |y_i|} =
|\Delta y_i| \, |y_i| .
\]
Applying Lemma~\ref{lemmaProductNorm} and (\ref{eqnCNorm}) and recalling
$\| \cdot \|_{\infty^*} = \| \cdot \|_1$, we derive the explicit expression of the dual norm
\begin{equation} \label{eqnDualC}
\| \Delta y \|_{c\ast} =
\| (\|\Delta y_1\|_{E\ast} ,..., \|\Delta y_n\|_{E\ast}) \|_{\infty\ast} =
\| (|\Delta y_1| \, |y_1| ,..., |\Delta y_n| \, |y_n|) \|_1 .
\end{equation}

Because of the condition $\| \Delta y \|_E = 1$ in the
condition number $\kappa(y)$ in (\ref{eqnK}), whether
$\Delta y$ is in $E_y$ or not, the expression
of the condition number $\kappa(y)$ remains valid. Indeed,
if $\Delta y \not\in E_y$, that is, $\Delta y_i\neq 0$
 while $y_i=0$ for some $i$, then $\| \Delta y \|_c = \infty$.
Consequently, such perturbation $\Delta y$ is
excluded from the calculation of $\kappa(y)$. Following
(\ref{eqnK}), we have the following lemma on the condition
number in adjoint operator and dual norm.

\begin{lemma} \label{lemmaK}
Using the above notations and the componentwise norm defined
in (\ref{eqnCNorm}), the condition number $\kappa(y)$ can be
expressed by
\[
\kappa(y) =
\max_{\| z \|_{G\ast} = 1}
\| (g'(y))^* \cdot z \|_{c\ast} ,
\]
where $\| \cdot \|_{c\ast}$ is given by (\ref{eqnDualC}).
\end{lemma}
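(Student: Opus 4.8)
The plan is to obtain Lemma~\ref{lemmaK} as a direct specialization of the general duality identity \eqref{eqnK}. That identity, which follows from Lemma~\ref{lemma:adjoint}, already asserts
\[
\kappa(y)=\max_{\|\Delta y\|_{E}=1}\|g'(y)\cdot\Delta y\|_{G}=\max_{\|z\|_{G\ast}=1}\|(g'(y))^{\ast}\cdot z\|_{E\ast}
\]
for an arbitrary norm $\|\cdot\|_{E}$ on the data space. So the task reduces to taking $E=\R^{n}$ equipped with the componentwise norm $\|\cdot\|_{c}$ of \eqref{eqnCNorm}, whence $\|\cdot\|_{E\ast}=\|\cdot\|_{c\ast}$, and reading off the right-hand maximization.

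First I would justify that the componentwise metric may be plugged into \eqref{eqnK} despite $\|\cdot\|_{c}$ failing to be finite off the subspace $E_{y}$. The observation is that any $\Delta y\notin E_{y}$ has $\|\Delta y\|_{c}=\infty$, so it can never satisfy the normalization $\|\Delta y\|_{c}=1$; the primal supremum defining $\kappa(y)$ is therefore effectively confined to $E_{y}$, on which $\|\cdot\|_{c}$ is a genuine norm and Lemma~\ref{lemma:adjoint} applies verbatim. Next I would compute the relevant dual norm: viewing $\R^{n}$ as the product $E_{1}\times\cdots\times E_{n}$ with each factor $\R$ carrying the scaled modulus $|\Delta y_{i}|/|y_{i}|$ and the absolute norm $v=\|\cdot\|_{\infty}$, Lemma~\ref{lemmaProductNorm} together with the single-coordinate duality $\|\Delta y_{i}\|_{E_{i}\ast}=|\Delta y_{i}|\,|y_{i}|$ delivers exactly the expression \eqref{eqnDualC} for $\|\cdot\|_{c\ast}$. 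Substituting this into the dual form above then produces the asserted identity.

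The hard part will be reconciling the primal restriction to $E_{y}$ with the dual side, since Lemma~\ref{lemma:adjoint} is phrased for a norm defined on the whole space. The delicate point is that the weight $|y_{i}|$ appearing in \eqref{eqnDualC} annihilates precisely those coordinates of $(g'(y))^{\ast}z$ indexed by $i$ with $y_{i}=0$, so that the dual norm ignores exactly the directions that are forbidden to $\Delta y$ in the primal problem. Once I verify this matching---that the coordinates dropped from the primal maximization are the same ones suppressed by the dual weighting---the two maximizations coincide and the stated formula for $\kappa(y)$ follows. Everything else is routine substitution.
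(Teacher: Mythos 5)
Your proposal is correct and follows essentially the same route as the paper: you specialize the duality identity (\ref{eqnK}) (i.e., Lemma~\ref{lemma:adjoint}) to the componentwise norm, observe that any $\Delta y\notin E_{y}$ has $\|\Delta y\|_{c}=\infty$ and is therefore excluded by the normalization $\|\Delta y\|_{c}=1$, and substitute the dual norm (\ref{eqnDualC}) obtained from Lemma~\ref{lemmaProductNorm} together with the single-coordinate duality $\|\Delta y_{i}\|_{E_{i}\ast}=|\Delta y_{i}|\,|y_{i}|$. Your additional verification that the weights $|y_{i}|$ in (\ref{eqnDualC}) annihilate precisely the coordinates forbidden to $\Delta y$ in the primal maximization is a sound and slightly more careful rendering of the paper's brief remark preceding the lemma, but it is the same argument.
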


After discussing the norms on the data space, in the next section,
we study the norms on the solution space, which can be either
componentwise or normwise. However, regardless of the norms chosen
in the solution space, we always use the componentwise norm in
the data space.

\subsection{Deriving condition number expressions via dual techniques}

Since $A^\top\Sigma_{pq}A$ is positive definite, the linear operator $g$ defined in \eqref{eq:g dfn} is continuously Fr{\'e}chet differentiable  in a neighborhood of the data $(A,\, b)$ and we denote by $J=g'(A,b)$  its derivative. For $ B\in \R^{m\times n}$ and  $c\in \R^{m}$, using the chain result of composition of derivatives, we get
\begin{align}\label{eqnJ}
J(B,c)&:=g'(A,b)\cdot (B,c)\nonumber \\
&=L^\top(A^\top\Sigma_{pq}A)^{-1}B^\top\Sigma_{pq}r-L^\top(A^\top\Sigma_{pq}A)^{-1}A^\top\Sigma_{pq}Bx+L^\top(A^\top\Sigma_{pq}A)^{-1}A^\top\Sigma_{pq}c\nonumber\\
&:=J_{1}(B)+J_{2}(c),
\end{align}
recalling that $r=b-Ax $ is the residual vector. Then $J(B,c)$ is a linear operator from the data space $\R^{m\times n}
\times \R^{m}$ to $\R^k$.
%i.e.
%
%\begin{align*}
%J(B,c)=& g'(A,b).(B,c)\\
%=& L^\top(A^\top\Sigma_{pq}A)^{-1}B^\top\Sigma_{pq}r-L^\top(A^\top\Sigma_{pq}A)^{-1}A^\top\Sigma_{pq}Bx+L^\top(A^\top\Sigma_{pq}A)^{-1}A^\top\Sigma_{pq}c
%\end{align*}
%where

Using the definition of the adjoint operator and
the definition of the scalar
product in the data space $\mathbb{R}^{m \times n} \times \mathbb{R}^m$, an explicit
expression of the adjoint operator of the above $J(B, {c})$ is given in the following lemma.

\begin{lemma}\label{lemmaDualJ}
The adjoint of operator of the Fr{\'e}chet derivative $J(B,c)$ in \eqref{eqnJ} is given by
\begin{align*}
J^{\ast}&:\R^{k}\rightarrow \R^{m\times n}\times \R^{m}\\
 &u\mapsto \left(\Sigma_{pq}ru^\top L^\top(A^\top\Sigma_{pq}A)^{-1}-\Sigma_{pq}A(A^\top\Sigma_{pq}A)^{-1}Lux^\top,\Sigma_{pq}^\top A(A^\top\Sigma_{pq}A)^{-1}Lu\right).
\end{align*}
\end{lemma}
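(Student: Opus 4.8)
The plan is to verify the announced formula directly from the definition of the adjoint (Definition~\ref{def:adj dual}) by pairing an arbitrary $u\in\R^{k}$ against $J(B,c)$ and then rewriting the resulting scalar until $(B,c)$ sits on the right of the data-space scalar product. With $G=\R^{k}$ carrying the canonical inner product and $E=\R^{m\times n}\times\R^{m}$ carrying $\langle(B_{1},c_{1}),(B_{2},c_{2})\rangle=\trace(B_{1}^\top B_{2})+c_{1}^\top c_{2}$, the task is to produce $(M,v)=J^{\ast}u$ with $u^\top J(B,c)=\trace(M^\top B)+v^\top c$ for all $B$ and $c$. Since $J=J_{1}+J_{2}$ separates the $B$-dependence from the $c$-dependence, I would treat the pieces in turn: the $c$-term feeds only into $v$, and the two $B$-terms feed only into $M$.

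The $J_{2}$ part is immediate, because $u^\top J_{2}(c)=u^\top L^\top(A^\top\Sigma_{pq}A)^{-1}A^\top\Sigma_{pq}c$ is already of the form $v^\top c$; reading off $v$ and using the symmetry of $(A^\top\Sigma_{pq}A)^{-1}$ and of $\Sigma_{pq}$ gives $v=\Sigma_{pq}^\top A(A^\top\Sigma_{pq}A)^{-1}Lu$, the second component. For the first term of $J_{1}$ I would abbreviate $w=(A^\top\Sigma_{pq}A)^{-1}Lu$ and convert the scalar $u^\top L^\top(A^\top\Sigma_{pq}A)^{-1}B^\top\Sigma_{pq}r=w^\top B^\top\Sigma_{pq}r$ into a trace, using $\alpha=\trace(\alpha)$ for scalars together with the cyclic and transpose invariance of the trace, so that it reads $\trace\bigl((\Sigma_{pq}r\,w^\top)^\top B\bigr)$. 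Matching against $\trace(M^\top B)$ identifies the summand $\Sigma_{pq}r\,u^\top L^\top(A^\top\Sigma_{pq}A)^{-1}$ of $M$.

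The second term of $J_{1}$ is the one demanding the most care. Writing it as $-w^\top A^\top\Sigma_{pq}Bx$ and setting $s=\Sigma_{pq}Aw$, it becomes the scalar $-s^\top Bx$; applying $\alpha=\trace(\alpha)$ and cycling the factor $x$ to the front gives $-\trace(x\,s^\top B)$, so that the corresponding summand of $M$ is $-s\,x^\top=-\Sigma_{pq}A(A^\top\Sigma_{pq}A)^{-1}Lu\,x^\top$. Adding the two $B$-contributions reproduces the first component exactly as stated. The main obstacle is bookkeeping rather than anything conceptual: in each scalar-to-trace conversion one must cycle precisely the right factor to the outside and track a single transpose so that $B$ (and not $B^\top$) ends up paired as $\trace(M^\top B)$, and one must invoke the symmetry of both $\Sigma_{pq}$ and $(A^\top\Sigma_{pq}A)^{-1}$ to place the inverse on the side demanded by the announced expressions for $M$ and $v$. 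Once all four scalar quantities are in canonical trace/inner-product form, comparison with the data-space scalar product reads off $(M,v)$ and finishes the proof.
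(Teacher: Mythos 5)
Your proposal is correct and follows essentially the same route as the paper's own proof: both pair $u$ with $J(B,c)=J_{1}(B)+J_{2}(c)$, convert each scalar into a trace, use cyclic/transpose invariance of the trace together with the symmetry of $\Sigma_{pq}$ and $(A^\top\Sigma_{pq}A)^{-1}$, and read off the two components $(J_1^*(u),J_2^*(u))$ by matching against the data-space scalar product. No gaps; the bookkeeping you flag (cycling the correct factor so $B$, not $B^\top$, is paired as $\trace(M^\top B)$) is exactly what the paper's displayed computation carries out.
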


%\begin{lemma}
%The adjoint of operator of the Fr{\'e}chet derivative $J(B,c)$ in \eqref{eqnJ} is given by
%\begin{equation}\label{eq:mapping}
%J:R^{m\times n}\times R^{m}\rightarrow R^{k}
%\end{equation}
%\begin{align*}
%(B,c)\mapsto &L^\top(A^\top\Sigma_{pq}A)^{-1}B^\top\Sigma_{pq}(b-Ax)-L^\top(A^\top\Sigma_{pq}A)^{-1}A^\top\Sigma_{pq}Bx\\
%&+L^\top(A^\top\Sigma_{pq}A)^{-1}A^\top\Sigma_{pq}c=J_{1}(B)+J_{2}(c)
%\end{align*}
%
%is
%
%\[J^{\ast}:R^{k}\rightarrow R^{m\times n}\times R^{m},\]
%\[ u\mapsto (\Sigma_{pq}(b-Ax)u^\top L^\top(A^\top\Sigma_{pq}A)^{-1}-\Sigma_{pq}A(A^\top\Sigma_{pq}A)^{-T}Lux^\top,\Sigma_{pq}^\top A(A^\top\Sigma_{pq}A)^{-T}Lu)\]
%\end{lemma}

\begin{proof}
Using \eqref{eqnJ} and the definition of the scalar product in the matrix space, for any $u\in \R^{k}$, we have
\begin{align*}
\langle u,J_{1}(B)\rangle=&u^\top(L^\top(A^\top\Sigma_{pq}A)^{-1}B^\top\Sigma_{pq}r-L^\top(A^\top\Sigma_{pq}A)^{-1}A^\top\Sigma_{pq}Bx)\\
%=&\trace ((L^\top(A^\top\Sigma_{pq}A)^{-1}B^\top\Sigma_{pq}ru^\top)-\trace (L^\top(A^\top\Sigma_{pq}A)^{-1}A^\top\Sigma_{pq}Bxu^\top)\\
=&\trace(\Sigma_{pq}ru^\top L^\top(A^\top\Sigma_{pq}A)^{-1}B^\top)-\trace(xu^\top L^\top(A^\top\Sigma_{pq}A)^{-1}A^\top\Sigma_{pq}B)\\
=&\langle\Sigma_{pq}ru^\top L^\top(A^\top\Sigma_{pq}A)^{-1}, B\rangle-\langle\Sigma_{pq}A (A^\top\Sigma_{pq}A)^{-\top}Lux^\top,B\rangle\\%\trace(xu^\top L^\top(A^\top\Sigma_{pq}A)^{-1}A^\top\Sigma_{pq}B)\\
%=&\trace(((\Sigma_{pq}ru^\top L^\top(A^\top\Sigma_{pq}A)^{-1})^\top-xu^\top L^\top(A^\top\Sigma_{pq}A)^{-1}A^\top\Sigma_{pq})B)\\
=&\langle \Sigma_{pq}ru^\top L^\top(A^\top\Sigma_{pq}A)^{-1}-\Sigma_{pq}A (A^\top\Sigma_{pq}A)^{-1}Lux^\top,B\rangle.
%=&\langle J_{1}^{\ast}(u) ,B\rangle.
\end{align*}
For the second part of the adjoint of the derivative  $J$, we have
\begin{align*}
\langle u,J_{2}c\rangle
=&u^\top L^\top(A^\top\Sigma_{pq}A)^{-1}A^\top\Sigma_{pq}c=\langle \Sigma_{pq}A(A^\top\Sigma_{pq}A)^{-\top}Lu,c\rangle.
%=\langle J_{2}^{\ast}(u) ,c\rangle.
\end{align*}
Let
\[
J_1^*({u}) =\Sigma_{pq}ru^\top L^\top(A^\top\Sigma_{pq}A)^{-1}-\Sigma_{pq}A (A^\top\Sigma_{pq}A)^{-1}Lux^\top
\]
and
\[
J_2^*({u}) =
\Sigma_{pq}A(A^\top\Sigma_{pq}A)^{-1}Lu,
\]
then $\langle J^*({u}),\ (B, {c}) \rangle =
\langle (J_1^*({u}), J_2^*({u})) , \
(B, {c}) \rangle = \langle {u},\ J(B, {c}) \rangle$,
which completes the proof. \hfill $\Box$
\end{proof}

After obtaining an explicit expression of the adjoint operator
of the Fr{\'e}chet derivative,  we now give an explicit expression
of the condition number $\kappa$ (\ref{eqnK}) in terms
the dual norm in the solution space in the following theorem,
where $D_A$ denotes the diagonal matrix
$\mathrm{diag}(\vect(A))$.

\begin{theorem} \label{thmK}
The condition number for the  ILS
problem can be expressed by
\[
\kappa = \max_{\| {u} \|_{G\ast} = 1}
\| [ VD_A \ \
W D_{b} ]^{\top} L {u} \|_1 =
\| [VD_{A},WD_{b}]^\top L\|_{G\ast,1} ,
\]
where
\begin{align} \label{eqnV}
 V=&(A^\top\Sigma_{pq}A)^{-1}\otimes( \Sigma_{pq}r) ^\top-x^\top\otimes W,\quad W=(A^\top\Sigma_{pq}A)^{-1}A^\top\Sigma_{pq}.
\end{align}
\end{theorem}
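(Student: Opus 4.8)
The plan is to reduce the statement entirely to the componentwise dual-norm formula (\ref{eqnDualC}) combined with the explicit adjoint from Lemma~\ref{lemmaDualJ}, after which only Kronecker algebra remains. By Lemma~\ref{lemmaK} we already have $\kappa = \max_{\|u\|_{G\ast}=1}\|J^{\ast}(u)\|_{c\ast}$, where $J^{\ast}=g'(A,b)^{\ast}$, so the entire content of the theorem is to rewrite $\|J^{\ast}(u)\|_{c\ast}$ as the weighted $1$-norm $\|[VD_A\ WD_b]^{\top}Lu\|_1$ and then maximize.

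First I would record the componentwise dual norm on the \emph{product} data space $\R^{m\times n}\times\R^m$. Extending (\ref{eqnDualC}) coordinatewise and invoking the product-norm duality of Lemma~\ref{lemmaProductNorm} (with $v=\|\cdot\|_\infty$, whose dual is $\|\cdot\|_1$), for any pair $(M,c)$ relative to the reference point $(A,b)$ one gets $\|(M,c)\|_{c\ast}=\sum_{ij}|M_{ij}|\,|A_{ij}|+\sum_i|c_i|\,|b_i|$. Since weighting componentwise by $|A_{ij}|$ (resp. $|b_i|$) is exactly left multiplication by $D_A=\diag(\vect(A))$ (resp. $D_b=\diag(b)$) followed by taking absolute values, this reads $\|J^{\ast}(u)\|_{c\ast}=\|D_A\vect(J_1^{\ast}(u))\|_1+\|D_b J_2^{\ast}(u)\|_1$, i.e. the $1$-norm of the stacked vector $(D_A\vect(J_1^{\ast}(u));\,D_b J_2^{\ast}(u))$, where $(J_1^{\ast}(u),J_2^{\ast}(u))=J^{\ast}(u)$ is taken from Lemma~\ref{lemmaDualJ}.

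Next I would vectorize. Both pieces of $J_1^{\ast}(u)$, namely $\Sigma_{pq}r\,u^{\top}L^{\top}(A^{\top}\Sigma_{pq}A)^{-1}$ and $\Sigma_{pq}A(A^{\top}\Sigma_{pq}A)^{-1}Lu\,x^{\top}$, are rank-one outer products, so $\vect(vw^{\top})=w\otimes v$ applies verbatim; using the mixed-product identities $(P\otimes q)v=(Pv)\otimes q$ and $(x\otimes W^{\top})v=x\otimes(W^{\top}v)$ to pull $u$ out to the right, together with the symmetry of $A^{\top}\Sigma_{pq}A$ and of $\Sigma_{pq}$, I expect to obtain $\vect(J_1^{\ast}(u))=V^{\top}Lu$ with $V$ exactly as in (\ref{eqnV}), and similarly $J_2^{\ast}(u)=\Sigma_{pq}A(A^{\top}\Sigma_{pq}A)^{-1}Lu=W^{\top}Lu$ with $W=(A^{\top}\Sigma_{pq}A)^{-1}A^{\top}\Sigma_{pq}$. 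Because $D_A$ and $D_b$ are diagonal (hence symmetric), the stacked vector becomes $(D_AV^{\top}Lu;\,D_bW^{\top}Lu)=[VD_A\ WD_b]^{\top}Lu$, so $\|J^{\ast}(u)\|_{c\ast}=\|[VD_A\ WD_b]^{\top}Lu\|_1$. Maximizing over $\|u\|_{G\ast}=1$ and reading the result as the operator norm $\|[VD_A,WD_b]^{\top}L\|_{G\ast,1}$ delivers the claim.

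The main obstacle I anticipate is the third step: getting the Kronecker bookkeeping exactly right. The delicate points are pulling the argument $u$ cleanly to the right of the two outer-product terms and, above all, tracking the transposes so that the first summand of $V$ appears as $(A^{\top}\Sigma_{pq}A)^{-1}\otimes(\Sigma_{pq}r)^{\top}$ and the second as $x^{\top}\otimes W$ with the correct orientation; a sign or transpose slip here would misidentify $V$. Everything else (the dual-norm extension in step two and the final maximization) is bookkeeping once Lemmas~\ref{lemmaDualJ} and~\ref{lemmaK} are in hand.
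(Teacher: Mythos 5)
Your proposal is correct and follows essentially the same route as the paper's proof: both start from Lemma~\ref{lemmaK}, evaluate $\|J^{\ast}(u)\|_{c\ast}$ via the componentwise dual norm (\ref{eqnDualC}) applied to the adjoint pair $(J_1^{\ast}(u),J_2^{\ast}(u))$ of Lemma~\ref{lemmaDualJ}, identify the result with $\|[VD_A\ \ WD_b]^{\top}Lu\|_1$, and maximize over $\|u\|_{G\ast}=1$. The only difference is cosmetic: you carry out the Kronecker bookkeeping through the identity $\vect(vw^{\top})=w\otimes v$ (and your transposes check out, using the symmetry of $A^{\top}\Sigma_{pq}A$ and $\Sigma_{pq}$), whereas the paper verifies entrywise that $(\Sigma_{pq}r)_i(A^{\top}\Sigma_{pq}A)^{-1}e_j - x_jWe_i$ is the $((j-1)m+i)$th column of $V$.
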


\begin{proof}
Let $\Delta a_{ij}$  and  $\Delta b_{i}$ be the entries of $\Delta A $ and  $\Delta b$ respectively, using \eqref{eqnDualC}, we have
 \[
 \|(\Delta A,\Delta b)\|_{c\ast}=\sum_{i,j}|\Delta a_{ij}||a_{ij}|+\sum_{i}|\Delta b_{i}||b_{i}|.
 \]
Applying Lemma~\ref{lemmaDualJ}, we derive that
\begin{align*}
\|J^{\ast}(u)\|_{c\ast}=&\sum_{j=1}^{n}\sum_{i=1}^{m}  |a_{ij}||(\Sigma_{pq}ru^\top L^\top(A^\top\Sigma_{pq}A)^{-1}-\Sigma_{pq}A(A^\top\Sigma_{pq}A)^{-T}Lux^\top)_{ij}|\\+&\sum_{i=1}^{m}
|b_{i}||(\Sigma_{pq}A(A^\top\Sigma_{pq}A)^{-\top}Lu)_{i}|\\
=&\sum_{j=1}^{n}\sum_{i=1}^{m}|a_{ij}||((\Sigma_{pq}r)_{i}((A^\top\Sigma_{pq}A)^{-1}e_{j})^\top-x_{j}((A^\top\Sigma_{pq}A)^{-1}A^\top\Sigma_{pq} e_{i})^\top)Lu|\\
&+\sum_{i=1}^{m}
|b_{i}||((A^\top\Sigma_{pq}A)^{-1}A^\top\Sigma_{pq} e_{i})^\top Lu|,
\end{align*}
where $(\Sigma_{pq}r)_{i}$ is the $i$th component of $\Sigma_{pq}r$. Then it can be verified that $(\Sigma_{pq}r)_{i} (A^\top\Sigma_{pq}A)^{-1}e_{j}$
is the $((j-1)m+i)$th column of the $n \times (mn)$ matrix
$(A^\top\Sigma_{pq}A)^{-1}\otimes( \Sigma_{pq}r) ^\top$
and $x_j (A^\top\Sigma_{pq}A)^{-1}A^\top\Sigma_{pq} e_{i}$ is the $((j-1)m+i)$th
column of the $n \times (mn)$ matrix
$x^\top\otimes L^\top(A^\top\Sigma_{pq}A)^{-1}A^\top\Sigma_{pq}$ in
$V$ (\ref{eqnV}), implying that the above expression
equals
\[
\left\| \left[ \begin{array}{c}
D_A V^\top L {u} \\
D_{{b}} W^\top L {u}
\end{array} \right] \right\|_1 =
\left\| [ V D_A \ \ W D_{{b}} ]^{\top}
L {u} \right\|_1 .
\]
The theorem then follows from Lemma~\ref{lemmaK}. \hfill $\Box$
\end{proof}

The following case study discusses some commonly used norms
for the norm in the solution space to obtain some specific
expressions of the condition number $\kappa$.

\begin{corollary} \label{colKinfty1}
Using the above notations,
when the infinity norm is chosen as the norm in the solution
space $G$, we get
\begin{equation} \label{eqnKinfty1}
\kappa_{\infty} =
\left\| |L^{\top } V| \vect(|A|) +
|L^{\top}W| \, |{b}| \right\|_{\infty} .
\end{equation}
\end{corollary}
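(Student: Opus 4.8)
The plan is to specialize the general formula of Theorem~\ref{thmK} to the choice $\|\cdot\|_G = \|\cdot\|_\infty$ and then evaluate the resulting induced operator norm explicitly. Since the paper records that the dual of the infinity norm is the $1$-norm, $\|\cdot\|_{\infty\ast} = \|\cdot\|_1$, the constraint $\|u\|_{G\ast}=1$ in Theorem~\ref{thmK} becomes $\|u\|_1 = 1$. Writing $M := [VD_A\ \ WD_b]^\top L$, which is an $(mn+m)\times k$ matrix, the condition number therefore reduces to $\kappa_\infty = \max_{\|u\|_1=1}\|Mu\|_1$, the operator norm of $M$ induced by the $1$-norm on both domain and range.

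The second step is the standard fact that this induced $1$-norm equals the maximum absolute column sum of $M$. Concretely, if $\ell_1,\dots,\ell_k$ denote the columns of $L$, then $Me_j = [VD_A\ \ WD_b]^\top \ell_j$, and because $\|Mu\|_1 \le \sum_j |u_j|\,\|Me_j\|_1 \le (\max_j\|Me_j\|_1)\sum_j|u_j|$ with equality attained at a coordinate vector, one obtains $\kappa_\infty = \max_{1\le j\le k}\|Me_j\|_1$.

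The third step is to expand each column norm using the block structure and the diagonal form of $D_A$ and $D_b$. Splitting the two blocks gives $\|Me_j\|_1 = \|D_A V^\top \ell_j\|_1 + \|D_b W^\top \ell_j\|_1$. Since $D_A = \Diag(\vect(A))$, the first term equals $\sum_i |\vect(A)_i|\,|(V^\top\ell_j)_i|$; writing $(V^\top\ell_j)_i = (L^\top V)_{ji}$, i.e.\ identifying this with the $j$-th row of $L^\top V$, this is exactly the $j$-th entry of $|L^\top V|\,\vect(|A|)$. The identical manipulation with $D_b = \Diag(b)$ turns the second term into the $j$-th entry of $|L^\top W|\,|b|$. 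Taking the maximum over $j$ then collapses the vector of column sums into its infinity norm, yielding \eqref{eqnKinfty1}.

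The routine but error-prone part will be the index bookkeeping in the third step: one must carefully track the $\vect$/Kronecker layout so that the $mn$ entries of $V^\top\ell_j$ line up correctly with $\vect(A)$, and verify that pulling the diagonal matrices out as absolute values is legitimate, that is, $|D_A| = \Diag(\vect(|A|))$ and $|D_b| = \Diag(|b|)$. No single step is conceptually hard; the main obstacle is simply keeping the row-versus-column and vec-ordering conventions consistent so that the column sums assemble into the compact form $|L^\top V|\,\vect(|A|) + |L^\top W|\,|b|$.
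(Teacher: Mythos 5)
Your proposal is correct and follows essentially the same route as the paper: specialize Theorem~\ref{thmK} using $\|\cdot\|_{\infty\ast}=\|\cdot\|_1$, reduce $\kappa_\infty$ to the induced $1$-norm of $[VD_A\ \ WD_b]^\top L$, and evaluate it entrywise. The only cosmetic difference is that you compute the maximum absolute column sum of $M$ directly, while the paper first transposes, writing $\|[VD_A\ \ WD_b]^\top L\|_1=\|L^\top[VD_A\ \ WD_b]\|_\infty$, and then applies the diagonal-scaling identity $\|BD_v\|_\infty=\||B|\,|v|\|_\infty$ --- the same computation in a different order.
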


%\noindent
\begin{proof}
When $\| \cdot \|_G = \| \cdot \|_{\infty}$, the dual norm
$\| \cdot \|_{G\ast} = \| \cdot \|_1$. Thus
\begin{eqnarray*}
\kappa_{\infty} &=&
\left\| [ V D_A \ \ W D_{b} ]^\top
L \right\|_1 \\
&=& \| L^\top [ V D_A \ \ W D_{b} ]
\|_{\infty} \\
&=& \left\| |L^{\top} V| \vect(|A|) +
|L^{\top} W| \, |{b}| \right\|_{\infty} .
\qquad \Box
\end{eqnarray*}
\end{proof}

The following corollary gives an alternative expression of
$\kappa_{\infty}$.

\begin{corollary} \label{colKinfty2}
Using the above notations,
when the infinity norm is chosen as the norm in the solution
space $G$, we get
\begin{equation} \label{eqnKinfty2}
\kappa_{\infty} =
\left\|
\sum_{j=1}^n |L^{\top} (A^{\top} W A)^{-1}({e}_j (\Sigma_{pq}r)^{\top} -
x_j A^{\top}\Sigma_{pq})| \, |A(:,j)| +
|L^{\top} W| \, |{b}| \right\|_{\infty},
\end{equation}
where $e_j$ is $j$th column of $I_n$.
\end{corollary}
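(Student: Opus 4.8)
The plan is to derive \eqref{eqnKinfty2} as a direct rewriting of \eqref{eqnKinfty1} from the previous corollary, exploiting the structure of the matrix $V$ defined in \eqref{eqnV}. Since both corollaries compute $\kappa_\infty$ under the infinity norm in the solution space, it suffices to show that the two expressions inside the outer $\|\cdot\|_\infty$ are in fact equal. The starting point is the term $|L^\top V|\,\vect(|A|)$ appearing in \eqref{eqnKinfty1}, and the goal is to reorganize it into the summation $\sum_{j=1}^n |L^\top(A^\top W A)^{-1}(e_j(\Sigma_{pq}r)^\top - x_j A^\top\Sigma_{pq})|\,|A(:,j)|$, recalling that $W=(A^\top\Sigma_{pq}A)^{-1}A^\top\Sigma_{pq}$ and noting that $A^\top W A$ should be understood consistently with the $(A^\top\Sigma_{pq}A)^{-1}$ factor in $V$.

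First I would unfold the Kronecker-product definition $V=(A^\top\Sigma_{pq}A)^{-1}\otimes(\Sigma_{pq}r)^\top - x^\top\otimes W$ column by column. The key bookkeeping fact, already established in the proof of Theorem~\ref{thmK}, is that the $((j-1)m+i)$th column of $V$ is precisely $(\Sigma_{pq}r)_i(A^\top\Sigma_{pq}A)^{-1}e_j - x_j(A^\top\Sigma_{pq}A)^{-1}A^\top\Sigma_{pq}e_i$. Multiplying on the left by $L^\top$, taking absolute values entrywise, and then pairing against the corresponding entry $|a_{ij}|$ of $\vect(|A|)$, I would group the $mn$ terms by the column index $j$. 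Within each fixed $j$, the inner sum over $i=1,\dots,m$ of $|L^\top((A^\top\Sigma_{pq}A)^{-1}(e_j(\Sigma_{pq}r)^\top - x_j A^\top\Sigma_{pq})e_i)|\,|a_{ij}|$ collapses, by the definition of the absolute-value matrix--vector product, into $|L^\top(A^\top\Sigma_{pq}A)^{-1}(e_j(\Sigma_{pq}r)^\top - x_j A^\top\Sigma_{pq})|\,|A(:,j)|$, where $A(:,j)$ is the $j$th column of $A$. Summing over $j$ then yields the claimed first term, and the second term $|L^\top W|\,|b|$ carries over verbatim from \eqref{eqnKinfty1}.

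The main obstacle is the careful index accounting in the Kronecker unfolding: one must verify that the $e_i$ on the right correctly extracts the $i$th column of $e_j(\Sigma_{pq}r)^\top - x_j A^\top\Sigma_{pq}$ and that $|A(:,j)|$ is the vector whose $i$th entry is $|a_{ij}|$, so that the entrywise identity $|M|\,|v| = \sum_i |Me_i|\,|v_i|$ is applied with the correct matrix $M = L^\top(A^\top\Sigma_{pq}A)^{-1}(e_j(\Sigma_{pq}r)^\top - x_j A^\top\Sigma_{pq})$. I would state this elementary identity explicitly, namely that for any matrix $M$ and nonnegative vector $w$ one has $|M|\,w=\sum_i |M e_i|\,w_i$, and apply it with $w=|A(:,j)|$. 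Everything else—pulling $L^\top$ through the Kronecker structure and reindexing the double sum $\sum_{i,j}$ as $\sum_j\sum_i$—is routine, so the proof reduces to this single regrouping step followed by invoking Corollary~\ref{colKinfty1}.
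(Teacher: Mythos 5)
Your proposal is correct and follows essentially the same route as the paper: the paper partitions $V = [V_1 \ \cdots \ V_n]$ and substitutes $V_j = (A^{\top}\Sigma_{pq}A)^{-1}(e_j(\Sigma_{pq}r)^{\top} - x_j A^{\top}\Sigma_{pq})$ using the column description of $V$ from the proof of Theorem~\ref{thmK}, which is precisely your column-wise regrouping of \eqref{eqnKinfty1} via the identity $|M|\,w = \sum_i |Me_i|\,w_i$, just stated at block rather than per-column granularity. You were also right to interpret the dimensionally inconsistent $(A^{\top}WA)^{-1}$ in the statement as a typo for $(A^{\top}\Sigma_{pq}A)^{-1}$, consistent with the paper's own proof and with Corollary~\ref{colKc}.
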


\begin{proof}
Partitioning
\[
V = [V_1 \ \cdots \ V_n] ,
\]
where each $V_j$, $1 \le j \le n$, is an $n \times m$ matrix,
we get
\begin{equation} \label{eqnKinfty2a}
\kappa_{\infty} =
\left\| |L^{\top}V| \vect(|A|) +
|L^{\top} W| \, |{b}| \right\|_{\infty} =
\left\| \sum_{j=1}^n |L^{\mathrm{T}}V_j| \, |A(:,j)| +
|L^{\top} W| \, |{b}| \right\|_{\infty} .
\end{equation}
Recalling that $(\Sigma_{pq}r)_{i} (A^\top\Sigma_{pq}A)^{-1}e_{j} -
x_j W e_{i}$ is the $((j-1)m+i)$th column
of $V$, we have
\[
V_j = (A^{\top}\Sigma_{pq}A)^{-1} ({e}_j (\Sigma_{pq}r)^{\top} -
x_j A^{\top}\Sigma_{pq}) .
\]
The expression (\ref{eqnKinfty2}) is obtained
by substituting $V_j$ in (\ref{eqnKinfty2a}) with the above expression
for $V_j$. \hfill $\Box$
\end{proof}
When the infinity norm is chosen as the norm in the solution
space $\R^n$, the corresponding relative mixed condition number is given by
\begin{equation}
\kappa_{\infty}^{\rel} =\frac{
\left\|
\sum_{j=1}^n |L^{\top} (A^{\top} W A)^{-1}({e}_j (\Sigma_{pq}r)^{\top} -
x_j A^{\top}\Sigma_{pq})| \, |A(:,j)| +
|L^{\top} W| \, |{b}| \right\|_{\infty}}{\|L^\top x\|_\infty}.
\end{equation}
%$$
%K_c^{rel}=\||D^{-1}_{L^{T}x}|(|V|{ vec}(|A|)+|W||b|)\|_\infty.
%$$

In the following, we consider the 2-norm on the solution space and derive an upper bound for the corresponding condition number respect to the 2-norm on the solution space.

\begin{corollary} \label{colK2}
When the 2-norm is used in the solution space, we have
\begin{equation} \label{eqnK2}
\kappa_2 \le
\sqrt{k} \, \kappa_{\infty} .
\end{equation}
\end{corollary}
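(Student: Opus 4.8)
The plan is to extract both $\kappa_2$ and $\kappa_\infty$ from the single expression in Theorem~\ref{thmK} and then relate them through the standard comparison of the $1$- and $2$-norms on the $k$-dimensional dual of the solution space $G$. Write $M = [VD_{A},WD_{b}]^\top L$, so that $M$ maps $\R^k$ into $\R^{mn+m}$ and, by Theorem~\ref{thmK},
\[
\kappa = \max_{\|u\|_{G\ast}=1}\|Mu\|_1 .
\]
When $G$ carries the infinity norm its dual norm is $\|\cdot\|_1$, whereas when $G$ carries the $2$-norm the dual norm is again $\|\cdot\|_2$ by self-duality. Hence
\[
\kappa_\infty=\max_{\|u\|_1=1}\|Mu\|_1,\qquad
\kappa_2=\max_{\|u\|_2=1}\|Mu\|_1 .
\]

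First I would record the homogeneous form of the $\kappa_\infty$ identity: since $\kappa_\infty$ is precisely the operator norm of $M$ induced by the $1$-norm on both the domain and the range, we have $\|Mu\|_1\le \kappa_\infty\|u\|_1$ for every $u\in\R^k$. Next I would invoke the elementary inequality $\|u\|_1\le \sqrt{k}\,\|u\|_2$, valid for all $u\in\R^k$ and obtained from Cauchy--Schwarz by writing $\|u\|_1=\langle |u|,\mathbf{1}\rangle\le \|u\|_2\,\|\mathbf{1}\|_2=\sqrt{k}\,\|u\|_2$, where $\mathbf{1}$ denotes the all-ones vector in $\R^k$. Chaining the two bounds gives, for any $u$ with $\|u\|_2=1$,
\[
\|Mu\|_1\le \kappa_\infty\|u\|_1\le \sqrt{k}\,\kappa_\infty\|u\|_2=\sqrt{k}\,\kappa_\infty .
\]
Taking the maximum over the unit sphere $\{\|u\|_2=1\}$ then yields $\kappa_2\le \sqrt{k}\,\kappa_\infty$, which is exactly \eqref{eqnK2}.

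There is no genuine obstacle in this argument; the only point demanding care is the correct identification of the dual norms that govern the two condition numbers (the infinity norm dualizes to $\|\cdot\|_1$, while the $2$-norm is self-dual), after which the claim is a one-line consequence of the sharp norm-comparison constant $\sqrt{k}$ between $\|\cdot\|_1$ and $\|\cdot\|_2$ on $\R^k$. It is worth observing that this constant is optimal for the underlying vector-norm inequality, so the crude chaining cannot be tightened without exploiting finer structure of $M$.
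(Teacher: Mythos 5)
Your proof is correct and takes essentially the same route as the paper: both identify $\kappa_2$ and $\kappa_\infty$ via the dual norms ($\|\cdot\|_2$ self-dual, $\|\cdot\|_\infty$ dualizing to $\|\cdot\|_1$) as the $(2,1)$- and $(1,1)$-induced norms of $M=[VD_A \ \ WD_b]^\top L$, and then chain $\|Mu\|_1\le \kappa_\infty\|u\|_1\le \sqrt{k}\,\kappa_\infty\|u\|_2$. The only cosmetic difference is that the paper phrases the chaining through a maximizing unit $2$-norm vector $\hat{u}$ and the bound $\|M\hat{u}\|_1\le \|M\|_1\|\hat{u}\|_1\le\sqrt{k}\,\|M\|_1$, which is the same inequality you use.
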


\begin{proof}
When $\| \cdot \|_G = \| \cdot \|_2$, then
$\| \cdot \|_{G\ast} = \| \cdot \|_2$. From Theorem~\ref{thmK},
\[
\kappa_2 = \| [ VD_A\ \
W D_{{b}}]^{\top} L \|_{2,1} .
\]
It follows from \cite{Higham2002Book} that for any matrix $B$,
$\| B \|_{2,1} = \max_{\| {u} \|_2 = 1} \| B {u} \|_1
= \| B \hat{{u}} \|_1$, where $\hat{{u}} \in \mathbb{R}^k$
is a unit 2-norm vector. Applying $\| \hat{{u}} \|_1 \le
\sqrt{k}\, \| \hat{{u}} \|_2$, we get
\[
\| B \|_{2,1} = \| B \hat{{u}} \|_1 \le
\| B \|_1 \| \hat{{u}} \|_1 \le
\sqrt{k}\, \| B \|_1 .
\]
Substituting the above $B$ with $[ VD_A\ \
W D_{b}]^{\top} L$, we have
\[
\kappa_2 \le \sqrt{k} \,
\| [ VD_A \ \ W D_{b}]^{\top} L \|_1 ,
\]
which implies (\ref{eqnK2}). \hfill $\Box$
\end{proof}

By now, we have considered the various mixed condition
numbers, that is, componentwise norm in the data space and
the infinity norm or 2-norm in the solution space. In the
rest of the subsection, we study the case of componentwise
condition number,
that is, componentwise norm in the solution space as well.

\begin{corollary} \label{colKc}
Considering the componentwise norm defined by
\begin{equation}\label{eq:comp norm}
\| {u} \|_c =
\min \{ \omega , \ | u_i | \le \omega \,
|(L^{\top} {x})_i|, \ i=1,...,k \} =
\max \{ |u_i| / |(L^{\top} {x})_i|, \ i=1,...,k \} ,
\end{equation}
in the solution space, we have the following three expressions
for the componentwise condition number
\begin{eqnarray*}
\kappa_c
&=& \| D_{L^{\top} {x}}^{-1}
L^{\top} [ VD_A \ \
W D_{{b}} ] \|_{\infty} \\
&=& \| |D_{L^{\top} {x}}^{-1} |
(|L^{\top} V| \vect (|A|) +
|L^{\top} W|\,|{b}| \|_{\infty} \\
&=& \left\|
\sum_{j=1}^n |D_{L^{\top} {x}}^{-1}
L^{\top} (A^{\top} \Sigma_{pq} A)^{-1}
({e}_j (\Sigma_{pq}r)^{\top} -
x_j A^{\top}\Sigma_{pq})| \, |A(:,j)| +
|D_{L^{\top} {x}}^{-1}
L^{\top} W | \, |{b}| \right\|_{\infty} .
\end{eqnarray*}
\end{corollary}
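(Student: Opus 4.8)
The plan is to follow exactly the adjoint/dual route already used for the mixed condition numbers, the only genuinely new ingredient being the dual of the componentwise norm \eqref{eq:comp norm} on the solution space $G$. By \eqref{eqnK} and Lemma~\ref{lemmaK}, the condition number is $\kappa_c = \max_{\|z\|_{G\ast}=1}\|J^\ast(z)\|_{c\ast}$, where $\|\cdot\|_{c\ast}$ is the data-space dual norm \eqref{eqnDualC} and $\|\cdot\|_{G\ast}$ is the dual of the solution-space norm \eqref{eq:comp norm}. So first I would compute $\|\cdot\|_{G\ast}$: writing $\|u\|_c = \max_i |u_i|/|(L^\top x)_i| = \|D_{L^\top x}^{-1}u\|_\infty$ and substituting $t = D_{L^\top x}^{-1}u$ into the definition of the dual norm (exactly as in the derivation of \eqref{eqnDualC}, or directly via Lemma~\ref{lemmaProductNorm} with $v = \|\cdot\|_\infty$), one gets $\|z\|_{G\ast} = \sum_{i=1}^k |z_i|\,|(L^\top x)_i|$, i.e. the constraint set is $\{z : \|D_{|L^\top x|}z\|_1 = 1\}$.

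Next I would reuse, verbatim, the identity established inside the proof of Theorem~\ref{thmK}, namely $\|J^\ast(z)\|_{c\ast} = \|[VD_A\ \ WD_b]^\top Lz\|_1$; this holds for any $z\in\R^k$ and does not depend on the choice of norm in $G$. Combining the two displays gives $\kappa_c = \max_{\|D_{|L^\top x|}z\|_1=1}\|[VD_A\ \ WD_b]^\top Lz\|_1$. The change of variables $w = D_{|L^\top x|}z$ turns this into the matrix $1$-norm $\|[VD_A\ \ WD_b]^\top L D_{|L^\top x|}^{-1}\|_1$, and applying $\|C^\top\|_1 = \|C\|_\infty$ together with the fact that the $\infty$-norm only sees the magnitudes $|D_{L^\top x}^{-1}|$ yields the first expression $\kappa_c = \|D_{L^\top x}^{-1}L^\top[VD_A\ \ WD_b]\|_\infty$.

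The remaining two expressions follow by the same row-sum bookkeeping as in Corollaries~\ref{colKinfty1} and~\ref{colKinfty2}: expanding the $\infty$-norm as a maximum over rows, using $D_A = \diag(\vect(A))$ and $D_b = \diag(b)$ to replace the absolute column sums of $L^\top V D_A$ and $L^\top W D_b$ by $|L^\top V|\vect(|A|)$ and $|L^\top W|\,|b|$, and finally substituting the partitioned blocks $V_j = (A^\top\Sigma_{pq}A)^{-1}(e_j(\Sigma_{pq}r)^\top - x_j A^\top\Sigma_{pq})$. The whole argument is routine once the solution-space dual norm is identified; the only point demanding care --- the part I would treat as the main obstacle --- is carrying the diagonal weight $D_{L^\top x}^{-1}$ correctly through the change of variables and the transpose step $\|C^\top\|_1=\|C\|_\infty$, so that it lands on the left of $L^\top[VD_A\ \ WD_b]$ with the right absolute-value conventions.
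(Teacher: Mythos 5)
Your proposal is correct and takes essentially the same approach as the paper: the paper's own proof is a one-line appeal to Theorem~\ref{thmK} and Corollaries~\ref{colKinfty1} and \ref{colKinfty2}, and your argument is precisely the fleshed-out version of that appeal --- identifying the solution-space dual norm $\| z \|_{G\ast} = \| D_{|L^{\top} x|} z \|_1$, reusing the identity $\|J^{\ast}(z)\|_{c\ast} = \|[VD_A \ \ WD_b]^{\top} L z\|_1$ from the proof of Theorem~\ref{thmK}, and then repeating the row-sum bookkeeping of the two earlier corollaries. All steps check out, including the handling of the diagonal weight through the change of variables and the transpose step $\|C\|_1 = \|C^{\top}\|_{\infty}$, so there is no gap.
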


\begin{proof}
The expressions immediately follow from Theorem~\ref{thmK}
and Corollaries \ref{colKinfty1} and \ref{colKinfty2}. \hfill $\Box$
\end{proof}

\subsection{Condition estimations}
In this subsection, we will derive the tight upper bounds for $\kappa_\infty^{\rel}$ and $\kappa_c$, which can be estimated efficiently by the power method  \cite[Chapter 15]{Higham2002Book} during using the QR-Cholesky method  \cite{1.0} to solve ILS.

We first review QR-Cholesky method for solving ILS, which was proposed by Chandrasekaran, Gu, and Sayed in \cite{1.0}. The QR-Cholesky method is stable and efficient for solving ILS. For $A\in \R^{m\times n}$,  assume its QR decomposition
\[
 A=QR=\bordermatrix{&  \cr
                p&Q_1\cr
                q&Q_2\cr
                }R,
 \]
 where $Q\in \R^{m\times n}$ is orthogonal $R \in \R^ {n\times n}$ is upper triangular and nonsingular.
So
\[
A^{\top}\Sigma_{pq}A=R^{\top }(Q_{1}^{T}Q_{1}-Q_{2}^{\top }Q_{2})R.
\]
From the positive definiteness of  $A^{T}\Sigma_{pq}A $ and $R$ being nonsingular, the matrix $Q_{1}^{\top}Q_{1}-Q_{2}^{\top}Q_{2}$ is positive definite, which enable us to compute the Cholesky decomposition of $Q_{1}^{\top}Q_{1}-Q_{2}^{\top}Q_{2}=U^{\top}U $, where $U\in\R^{n\times n}$ is upper triangular and nonsingular. It is easy to see that
\[
A^{\top}\Sigma_{pq}A=R^{\top }U^{\top }UR.
\]
So the normal equation of ILS can be written
\[
(Q_{1}^{\top}Q_{1}-Q_{2}^{T}Q_{2})Rx=Q^{\top }\Sigma_{pq}b.
\]
Thus we can compute $x$ by solving the following system
$$
U^{\top }URx=Q^{\top }\Sigma_{pq}b,
$$
which can be implemented by  forward and backward substitutions for some right hands with different triangular coefficient matrices in sequence. Moreover, after the QR-Cholesky method \cite{1.0}, the matrices $R$ and $U$ have been computed, which can help us to reduce the computation cost when using the power method  \cite[Chapter 15]{Higham2002Book}  to estimate the upper bounds for $\kappa_\infty^{\rel}$ and $\kappa_c$.

In the following, we will give tight bounds for $\kappa_\infty^{\rel}$ and $\kappa_c$, which can be estimated efficiently by the power method  \cite[Chapter 15]{Higham2002Book}. Firstly, note that for any
matrix $B\in\R^{p\times q}$ and diagonal matrix
$D_v\in\R^{q\times q}$, 
\begin{equation*}\label{eq:norm diagonal}
  \|BD_v\|_\infty
  =\|\,|BD_v|\,\|_\infty
  =\|\,|B|\,|D_v|\,\|_\infty
  =\|\,|B||D_v|\bfe\,\|_\infty
  =\left\|\,\left|B\right||v|\right  \|_\infty.
\end{equation*}
where ${\bf e}=[1,\,\ldots,1]^\rt \in \R^q$. With the above property and triangle inequality, we can prove the following theorem and its proof is omitted.

\begin{theorem}
  With the notations above, we have the following bounds
  \begin{align*}
   \frac{1}{2}\kappa_\infty^\u \leq &\kappa_\infty^{\rel}\leq \kappa_\infty^\u,\quad \frac{1}{2}\kappa_c^\u \leq \kappa_c\leq \kappa_c^\u,
  \end{align*}
  where
  \begin{align*}
  \kappa_\infty^\u&=\frac{\left\|L^\top R^{-1}U^{-1}U^{-\top}R^{-\top}\left[{e}_1 (\Sigma_{pq}r)^{\top} -
x_1 A^{\top}\Sigma_{pq}, \, \ldots, {e}_n (\Sigma_{pq}r)^{\top} -
x_n A^{\top}\Sigma_{pq}\right]D_A\right\|_\infty}{\left\|L^\top x\right\|_\infty} \\
&+\frac{\left\|L^\top R^{-1}U^{-1}U^{-\top}R^{-\top}A^\top\Sigma_{pq} D_b\right\|_\infty}{\left\|L^\top x\right\|_\infty},\\
  \kappa_c^\u&=\left\|D_{L^{\top} {x}}^{-1}L^\top R^{-1}U^{-1}U^{-\top}R^{-\top} \left[{e}_1 (\Sigma_{pq}r)^{\top} -
x_1 A^{\top}\Sigma_{pq}, \, \ldots, {e}_n (\Sigma_{pq}r)^{\top} -
x_n A^{\top}\Sigma_{pq}\right]D_A\right\|_\infty\\
&+\left\|D_{L^{\top} {x}}^{-1}L^\top R^{-1}U^{-1}U^{-\top}R^{-\top} A^\top\Sigma_{pq} D_b\right\|_\infty.
  \end{align*}
\end{theorem}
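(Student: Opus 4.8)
The plan is to reduce the claimed two-sided bounds to an elementary inequality about infinity norms of nonnegative vectors, after first matching the matrices occurring in $\kappa_\infty^\u$ and $\kappa_c^\u$ with those already produced in Corollaries~\ref{colKinfty2} and~\ref{colKc}. The first step would be to invoke the QR-Cholesky factorization reviewed above, which yields $A^\top\Sigma_{pq}A = R^\top U^\top U R$ and hence $(A^\top\Sigma_{pq}A)^{-1} = R^{-1}U^{-1}U^{-\top}R^{-\top}$. Substituting this identity lets me replace every occurrence of $(A^\top\Sigma_{pq}A)^{-1}$ in the two corollaries by the product $R^{-1}U^{-1}U^{-\top}R^{-\top}$ that appears in the statement, so the only genuine gap between $\kappa_c$ and $\kappa_c^\u$ (and between $\kappa_\infty^{\rel}$ and $\kappa_\infty^\u$) is structural, not in the matrices themselves.

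Next I would recast the componentwise expression of $\kappa_c$ from Corollary~\ref{colKc} into block-matrix form using the displayed identity $\|BD_v\|_\infty = \|\,|B|\,|v|\,\|_\infty$. Writing $M = [M_1,\ldots,M_n]$ for the horizontal concatenation of the $k\times m$ blocks $M_j = D_{L^\top x}^{-1}L^\top(A^\top\Sigma_{pq}A)^{-1}(e_j(\Sigma_{pq}r)^\top - x_j A^\top\Sigma_{pq})$ and $N = D_{L^\top x}^{-1}L^\top W$, the key observation is that $D_A = \diag(\vect(A))$ together with the block structure gives $|M|\,|\vect(A)| = \sum_{j=1}^n |M_j|\,|A(:,j)|$, so that $\|MD_A\|_\infty = \|\sum_j |M_j|\,|A(:,j)|\|_\infty$ and $\|ND_b\|_\infty = \|\,|N|\,|b|\,\|_\infty$. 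Setting $P = \sum_j|M_j|\,|A(:,j)|$ and $Q = |N|\,|b|$, both nonnegative vectors, Corollary~\ref{colKc} reads $\kappa_c = \|P+Q\|_\infty$, whereas the statement's quantity is exactly $\kappa_c^\u = \|P\|_\infty + \|Q\|_\infty$.

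The bounds then follow from a two-line inequality. The triangle inequality gives $\|P+Q\|_\infty \le \|P\|_\infty + \|Q\|_\infty = \kappa_c^\u$, which is the upper bound. For the lower bound, monotonicity of the infinity norm on nonnegative vectors (using $P+Q \ge P \ge 0$ and $P+Q \ge Q \ge 0$ entrywise) yields $\|P+Q\|_\infty \ge \max\{\|P\|_\infty,\|Q\|_\infty\} \ge \tfrac12(\|P\|_\infty + \|Q\|_\infty) = \tfrac12\kappa_c^\u$. The bounds for $\kappa_\infty^{\rel}$ are obtained identically, the only change being that the per-row scaling $D_{L^\top x}^{-1}$ is replaced by the single scalar $1/\|L^\top x\|_\infty$; this factor pulls out of every infinity norm and leaves the same nonnegative-vector inequality applied to $P' = \sum_j|L^\top(A^\top\Sigma_{pq}A)^{-1}(e_j(\Sigma_{pq}r)^\top - x_jA^\top\Sigma_{pq})|\,|A(:,j)|$ and $Q' = |L^\top W|\,|b|$, where $\kappa_\infty = \|P'+Q'\|_\infty$ by Corollary~\ref{colKinfty2}.

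There is no serious obstacle; the argument is essentially bookkeeping, which is why the paper omits it. The only point requiring care is the block-matrix manipulation in the second step, namely checking that the concatenation-versus-stacking identity $|M|\,|\vect(A)| = \sum_j|M_j|\,|A(:,j)|$ survives the entrywise absolute values, and noting that the factor $\tfrac12$ in the lower bound arises solely from $\max\{a,b\}\ge (a+b)/2$ rather than from any sharper estimate.
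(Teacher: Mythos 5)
Your proposal is correct and is essentially the proof the paper omits: you substitute $(A^\top\Sigma_{pq}A)^{-1}=R^{-1}U^{-1}U^{-\top}R^{-\top}$ from the QR--Cholesky factorization into the expressions of Corollaries~\ref{colKinfty2} and~\ref{colKc}, use the paper's displayed identity $\|BD_v\|_\infty=\|\,|B|\,|v|\,\|_\infty$ to identify $\kappa_\infty^{\rel}$ and $\kappa_c$ with $\|P+Q\|_\infty$ and the bounds $\kappa_\infty^\u$, $\kappa_c^\u$ with $\|P\|_\infty+\|Q\|_\infty$ for nonnegative vectors $P,Q$, and then the triangle inequality gives the upper bound while entrywise monotonicity together with $\max\{a,b\}\ge (a+b)/2$ gives the factor $\frac{1}{2}$ --- exactly the ``above property and triangle inequality'' route the authors indicate. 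Your block-concatenation check $|M|\,|\vect(A)|=\sum_{j=1}^n |M_j|\,|A(:,j)|$ and your tacit correction of the paper's typo $(A^\top W A)^{-1}$ to $(A^\top\Sigma_{pq}A)^{-1}$ are both sound, so nothing further is needed.
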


The power method \cite[Chapter 15]{Higham2002Book} is an efficient algorithm to estimate 1-norm for any matrix $B$. The main computational cost of the power method is matrix-vector multiplications $Bv$ and $B^\top v$ for some vector $v$. In view of the expressions of $\kappa_\infty^\u$ and $\kappa_c^\u $, when the power method is implemented, the computation complexity mainly relies on $R^{-1}U^{-1}U^{-\top}R^{-\top}v$ for some vector $v$, which can be computed by the forward and backward substitutions for some
linear systems with triangular coefficient matrices. Because the upper triangular matrices $R$ and $U$ have been computed,  the computational cost of the power method to estimate $\kappa_\infty^\u$ and $\kappa_c^\u $ can be reduced from the fact of the triangular structures of $R$ and $U$. Thus the power methods for $\kappa_\infty^\u$ and $\kappa_c^\u $ are efficient and their detailed descriptions are omitted.

\section{Numerical examples}\label{sec:nume ex}

In this section we do some numerical examples to show the effectiveness of the previous derived results. All the
computations are carried out using \textsc{Matlab} 8.1 with the machine precision
$\mu=2.2 \times 10^{-16}$.

In this section, we fix $m=16,\, n=8$ and $p=10$. We adopt the matrix  as generated in \cite{5.0}.   The coefficient matrix $A$ has the following form:
$$
A=\bordermatrix{&  \cr
                p&S_1DV\cr
                q&\frac{1}{2}S_2DV\cr
                }\in \R^{m\times n},
$$
where $V\in \R^{n\times n},\, S_1\in\R^{p\times n}$ and $S_2\in \R^{q\times n}$ are randomized orthogonal matrices by using QR decomposition of randomized matrices, and $D\in \R^{n\times n}$ is diagonal with diagonal elements distributed exponentially from $\delta^{-1}$ to 1. It can be verified that $A^\top \Sigma_{pq}A=(3/4)V^\top D^2V$, so the uniqueness condition \eqref{eq:unique cond} is satisfied.  Let $v$ be a $n\times 1$ vector with $v_1=v_2=\epsilon$ and $v_n=1/\epsilon$, and other components are set to zero. Then the vector $b$ is constructed as follows
\begin{equation}\label{eq:b}
b=A\cdot v+10^{-5}\cdot b_2,
\end{equation}
where $b_2$ is an unitary vector satisfying $A^\top b_2=0$. We use  the QR-Cholesky method \cite{1.0} for solving ILS to compute the solution $x$. Usually the solution $x$ have badly scaled components,  for example, the first and second components of $x$ often have the same order of $\epsilon$ while the last component of $x$ is order of $1/\epsilon$. For the perturbations $E$ on $A$ and $f$ on $b$, we generate them as
\begin{equation}\label{eq:pert}
\Delta A=\varepsilon \cdot \Delta A_{1}\odot A,\quad, \Delta b=\varepsilon \cdot \Delta b_{1}\odot  b,%\quad \varepsilon=10^{-8}
\end{equation}
where $\varepsilon=10^{-10}$,  each components of $\Delta A_1 \in \R^{m\times n}$ and $\Delta b_1\in \R^{m}$ are uniformly distributed in the interval $ (-1,1) $,  and $\odot$ denotes the componentwise multiplication of two conformal dimensional matrices. Let the perturbed solution $\tilde x$ is computed by \textsc{Matlab} using  the QR-Cholesky method \cite{1.0} for the following perturbed ILS problem
\begin{equation}\label{eq:pert ILS}
\min\left((b+\Delta b)-(A+\Delta A)\tilde{x}\right)^{\top}\Sigma_{pq}\left((b+\Delta b)-(A+\Delta A)\tilde x\right).
\end{equation}

For the $L$ matrix in our condition numbers, we choose
\[
L_0=I_n,\quad
L_1=\left(\begin{matrix} 1 & 0 \\
                          0 & 1 \\
                          0 & 0\\
                          \vdots&\vdots\\
                          0&0\end{matrix}\right)\in \R^{n\times 2},\quad
L_2=\left(\begin{matrix} 0 & 0 &\cdots& 1\end{matrix}\right)^{\mathrm{\top}} \in \R^{n\times 1}.
\]
Thus, corresponding to the above three matrices, the whole
${x}$, the subvector $[x_1 \ x_2]^{\mathrm{T}}$, and
the component $x_n$ are selected respectively.

We measure the normwise, mixed and componentwise relative errors in
$L {x}$ defined by
\[
r_2^{\rm rel} =
\frac{\|L^{\top} \tilde{{x}} -
L^{\top} {x} \|_2}
{\|L^{\top} {x} \|_2},\quad
r_\infty^{\rm rel} =
\frac{\|L^{\top} \tilde{{x}} -
L^{\top} {x} \|_\infty}
{\|L^{\top} {x} \|_\infty},\quad
r_c^{\rm rel} =
\frac{\|L^{\top} \tilde{{x}} -
L^{\top} {x} \|_c}
{\|L^{\top} {x} \|_c},
\]
where $\|\cdot\|_c$ is the componentwise norm defined
in \eqref{eq:comp norm}. The relative normwise condition number $\alpha_1$ given in \cite{5.0} for the whole vector $x$ is defined as
%$$\tilde{K}_{1}=\|M^{-1}A^{T}\|_{2}\|A\|_{F}\left(\frac{\|b\|_{2}}{\|A\|_{F}\|x\|_{2}}+1\right)+\|M^{-1}\|_{2}\|A\|_{F}^{2}\frac{\|A\|_{F}}{\|A\|_{F}}
%\frac{\|r\|_{2}}{\|A\|_{F}\|x\|_{2}}.
%$$
$$
\alpha_1=\frac{\|M^{-1}A^{\top}\|_{2}\|b\|_{2}+\|(x^{\top}\otimes M^{-1}A^\top\Sigma_{pq})-(r^{T}\Sigma_{pq}\otimes M^{-1}\Pi)\|_{2}\|A\|_{F}}{\|x\|_{2}}.
$$
where $M=A^{\top }\Sigma_{pq}A$  and $\Pi$ is the vec-permutation matrix satisfying $\Pi \,\vect(B)=\vect(B^\top)$. The relative normwise condition number $\alpha_2$  for a linear function of the solution $x$ defined in \eqref{eq:g dfn} is derived in \cite[Eqn.(3.8)]{LiWang2016}
$$
\alpha_2=\frac{\left\|L^\top M^{-1}\left[\|A\|_F \|r\|_2(I_n-\frac{1}{\|r\|_2^2}A^\top r x^\top)\,\, -\|b\|_2  A^\top \,\, \|A\|_F \|x\|_2A^\top(I_m-\frac{1}{\|r\|_2^2}rr^\top)\right]\right\|_2}{\|L^\top x\|_2}.
$$
So the above normwise condition number is a relative condition number corresponding to the following definition
\begin{equation*} %\label{eqnRice}
\alpha_2 = \lim_{\epsilon \to 0}
\sup_{\left \|\left [\Delta A]/\|A\|_F\,\, \Delta b/\|b\|_2\right] \right\|_F \le \epsilon }
\frac{\| g (A + \delta A, \, b+\Delta b) - g (A,\, b) \|_2 }{\epsilon\, \| g (A,\, b) \|_2}.
\end{equation*}

In Table \ref{tab:T1}, we  test different choices of $\epsilon$ and $\delta$. It is observed that when $\epsilon$ (or $\delta$) deceases to 0, ${\rm cond}(A^{\top}\Sigma_{pq}A)$ increases which means that ILS tends to be more ill-conditioned. The first order perturbation bounds: $\alpha_2\cdot 10^{-10}$, $\kappa_\infty^{\rel} \cdot 10^{-10}$ and $\kappa_c^{\rel} \cdot 10^{-10}$ can bound the true relative errors   $r_2^\rel$, $r_\infty^\rel$ and $r_c^\rel$ for different $L$. On the other hand,  the first order perturbation bounds $\alpha_1\cdot 10^{-10}$ cannot bound the true relative normwise errors when $L=L_1$ or $L=L_2$ because $\alpha_1$ does not take account of the conditioning of the particular component of $x$.  Also, the first and second component of the solution $x$ are ill-conditioned, which is coincide with numerical values of $\alpha_2$, $\kappa_\infty^{\rel} $ and $\kappa_c^{\rel}$ for $L_1$. While the last component of $x$ has better conditioning, which show the effectiveness of $\kappa_\infty^{\rel}$ and $\kappa_c^{\rel}$. Moreover, the values of $\alpha_2$ are always larger than the counterparts of
$\kappa_\infty^{\rel}$ and $\kappa_c^{\rel}$ for most cases, which means that it reasonable to consider the componentwise perturbations on the data instead of the normwise perturbations.

\begin{landscape}

\begin{table}%[H]
\caption{
Comparison of condition numbers with the corresponding relative
errors and the \textsc{Matlab}  ${\rm cond}(A^{\top}\Sigma_{pq}A)$.
} \label{tab:T1}
\centering
\begin{tabular}{ccccccccccc}
\hline
$\epsilon$ & $\delta$ & $L$ &${\rm cond}(A^{\top}\Sigma_{pq}A)$& $r_2^\rel$ & $\alpha_1 $& $\alpha_2$ & $r_\infty^{\rel}$&$\kappa_\infty^{\rel}$ & $r_c^{\rel}$&$\kappa_c^{\rel}$  \\
\hline
$10^{-3}$ & $10^{-3}$ &
$I$&  1.0431e+06& 1.0066e-08 & 2.1894e+03 & 1.7760e+03 & 6.6497e-09 & 4.1222e+02 & 9.6157e-01 & 4.2462e+10  \\
 & &  $L_1$& & 2.8307e-03 & & 7.0546e+08 & 3.4033e-03 & 3.3942e+08 & 4.2984e-03 & 4.2869e+08\\
 & & $L_2$ & &1.3381e-09 & & 3.3965e+02 & 6.6497e-09 & 8.8258e+01 & 1.3381e-09 & 8.8258e+01  \\
\hline
$10^{-3}$ & $10^{-6}$  &
$I$ &   1.0347e+12 &3.6836e-05 & 2.7311e+06 & 1.9411e+06 & 2.1181e-05 & 1.1591e+06 & 5.9967e+00 & 2.4897e+11  \\
&&$L_1$&& 2.3777e-02 & & 1.3607e+09 & 2.3836e-02 & 1.3044e+09 & 2.3836e-02 & 1.3044e+09\\
&&$L_2$ & &7.8792e-06 & & 4.5249e+05 & 2.1181e-05 & 4.3230e+05 & 7.8792e-06 & 4.3230e+05 \\
\hline
$10^{-6}$ & $10^{-3}$ &
$I$&  1.0563e+06&1.5060e-09 & 1.7958e+03 & 1.5477e+03 & 1.0985e-09 & 2.7511e+02 & 1.4888e+00 & 3.3108e+11\\
&&$L_1$& &6.1005e-01 & & 8.9987e+11 & 6.8260e-01 & 2.5527e+11 & 1.0943e+00 & 3.1387e+11 \\
&&$L_2$&  &3.4119e-11 & & 1.9931e+02 & 1.0985e-09 & 3.5591e+01 & 3.4119e-11 & 3.5591e+01  \\
\hline
$10^{-6}$ & $10^{-6}$ &
$I$&   9.9265e+11&3.9831e-06 & 1.7151e+06 & 1.5576e+06 & 2.2579e-06 & 1.0670e+05 & 1.6437e+01 & 7.0924e+11 \\
&&$L_1$& & 3.4589e+00 & & 1.4342e+12 & 3.7110e+00 & 1.7458e+11 & 3.7110e+00 & 1.7458e+11\\
&&$L_2$ & &1.9323e-06 & & 7.9804e+05 & 2.2579e-06 & 9.2643e+04 & 1.9323e-06 & 9.2643e+04  \\
\hline
\end{tabular}
\end{table}

\end{landscape}

%Both $A$ and $b$ are normalized by $A=A/\|A\|_F$ and $b=b/\|b\|_2$.

In the rest of this section, we will test the effectiveness of the linearization estimate $\bar \mu$ for the normwise backward error $\mu$ of ILS in Section \ref{sec:backward error}. The coefficient matrix $A$ is generated as previous. For the vector $b$, we adopt two ways to generate it. One is as the previous method \eqref{eq:b} with $\epsilon=10^{-3}$. Another way is to generate $b$ by using \textsc{Matlab} command {\sf randn}, i.e. $b$ is a standard Gaussian vector.  For given $A$ and $b$, we generate perturbations $\Delta A$ and $\Delta b$ as in \eqref{eq:pert}. Let the computed solution $y$ is calculated by the QR-Cholesky method \cite{1.0} for the perturbed ILS problem \eqref{eq:pert ILS}.
For the computed solution $y$, its normwise backward error $\mu$  is defined by \eqref{eq:xhwu}, and its linearization estimate $\bar \mu$ for the normwise backward error $\mu$ is given by \eqref{eq:xhwugj}. We always use the common choice  $\theta=1$ in \eqref{eq:xhwu}.

Please note that there is no explicit expression for the normwise backward error $\mu$. Since the perturbations $\Delta A$ and $\Delta b$ are known in advance, we can calculate the following quantity $\mu_1$ to approximate $\mu$:
$$
 \mu_{1}= \|[\Delta A, \theta \Delta b]\|_F,
 $$
 and compare $\mu_1$ with the linearization estimate $\bar \mu$ to show the effectiveness of $\bar \mu$. From the definition of the normwise backward error $\mu$ defined in \eqref{eq:xhwu}, it is easy to see that $\mu \leq \mu_1$. Note that $\mu$ may be much smaller that $\mu_1$ because $\mu$ is the smallest perturbation magnitude over the set of all perturbations $\xi_{\ILS}$ \eqref{eq:xi}. We test different choices of the perturbations magnitude $\varepsilon$ in \eqref{eq:pert} and the parameter $\delta$ which determines the conditioning of the coefficient matrix $A^\top \Sigma_{pq}A$ in the normal equation \eqref{eq:normal}. We report the numerical values of $\mu_1$, $\bar \mu$ and the residual norm $\gamma$, where
$$
\gamma=\left\|(A+\Delta A)^{\top}\Sigma_{pq}[(b+\Delta b)-(A+\Delta A)y]\right\|_2.
$$

From Tables \ref{tab:b1} and \ref{tab:b2}, it is observed that the residual norm $\gamma$ increases when $\delta$ decreases from $10^{-1}$ to $10^{-8}$ under both small and large perturbations, which means that when ILS becomes more ill-conditioned the computed solution cannot be calculated accurately. Also the linearization estimate $\bar \mu$ increases corresponding to the decrease  of $\delta$ because more smaller $\delta$ makes more ill-posedness of ILS. For the vector $b$ generated by \eqref{eq:b}, $\bar \mu$ can be much bigger than $\mu_1$ when large perturbation magnitude $\varepsilon=10^{-7}$ both for well-conditioned ($\delta=10^{-1}$) and ill-conditioned ($\delta =10^{-8}$) ILS problem, while we cannot conclude that $\bar \mu$ gives a bad estimation for $\mu$ because $\mu$ is the smallest perturbation magnitude to let the computed solution $y$ be the exact solution of the perturbed ILS \eqref{eq:pert ILS} mathematically. In Table \ref{tab:b1}, when the perturbation magnitude $\varepsilon=10^{-14}$, $\bar \mu$ can approximate $\mu_1$ more closely compared the cases for $\varepsilon=10^{-7}$. Especially, when $\delta =10^{-8}$, $\bar \mu$ and $\mu_1$ have the same order, which means that the linearization estimate $\mu_1$ can approximate the normwsie backward error $\mu$ accurately. For the standard Gaussian vector $b$, $\bar \mu$ can approximate $\mu_1$ more accurately for well-conditioned and ill-conditioned ILS problem under both small and large perturbations. The differences between $\bar \mu$ and $\mu_1$ are up to  a hundredfold. Overall, the linearization estimate $\bar \mu$ is effective.

\section{Concluding Remarks}\label{sec:con}

In this paper we studied the perturbation analysis for the indefinite least squares problem. The linearization estimate for the normwise backward error was obtained and condition number expressions for the linear function of the ILS solution were derived through the dual techniques under componentwise perturbations for the input data. Moreover,  these condition numbers could be estimated efficiently by the power method \cite[Chapter 15]{Higham2002Book} when solving ILS using the QR-Cholesky method \cite{1.0}. Numerical examples validated the effectiveness of the proposed condition numbers and  linearization estimate for the normwise backward error.

% and the perturbed residual norm $\bar \gamma$

% $$
%\gamma=\left\|A^{\top}\Sigma_{pq}(b-Ay)\right\|_2  ,    \quad \bar \gamma=\left\|(A+\Delta A)^{\top}\Sigma_{pq}[(b+\Delta b)-(A+\Delta A)y]\right\|_2.
%$$

%\begin{table}[!htpb]
%  \caption{½âµÄÏòºóÎó²îµÄÏßÐÔ»¯¹À¼ÆÓëÈÅ¶¯Ç°ºóµÄ²Ð²î£¬$ \theta=1 $}
%  \label{tab:atable2r}
%  \centering
%   \begin{tabular}{cccccc}
%   \hline
%     $\theta$&$\varepsilon$& $ \delta $ & $ \gamma$  & $ \mu_{1} $ & $ \bar{\mu} $\\
%      \hline
%   1 & $10^{-7}$ & $ 10^{-1} $ &   2.4952e-13 & 4.6053e-05 & 8.1609e-12  \\
%    &  & $ 10^{-4} $ &    1.2796e-13 & 3.8219e-05 & 3.5642e-10  \\
%     &  & $ 10^{-8} $ &     4.8082e-13 & 3.4172e-05 & 2.4653e-09    \\
%     \hline
%     1 & $10^{-14}$ & $ 10^{-1} $ &  1.0159e-13 & 1.8071e-12 & 3.4507e-16    \\
%     &  & $ 10^{-4} $ &  2.4476e-14 & 1.8529e-12 & 2.5066e-16 \\
%      &  & $ 10^{-8} $ & 6.1214e-12 & 1.0568e-12 & 5.5194e-16\\
%      \hline
%    \end{tabular}
%\end{table}

\begin{table}[!htpb]
  \caption{Comparisons between $\mu_1$ and $\bar\mu$, where the vector $b$ is generated by \eqref{eq:b}}
  \label{tab:b1}
  \centering
   \begin{tabular}{ccccc}
   \hline
     $\varepsilon$& $ \delta $ & $ \gamma$  & $ \mu_{1} $ & $ \bar{\mu} $\\
      \hline
    $10^{-7}$ & $ 10^{-1} $ &  1.6492e-13 & 3.1340e-05 & 2.4542e-15     \\
      & $ 10^{-4} $ &    3.9891e-12 & 2.1504e-05 & 5.0464e-14     \\
       & $ 10^{-8} $ &    1.2220e-08 & 6.3091e-06 & 1.2096e-09       \\
     \hline
      $10^{-14}$ & $ 10^{-1} $ & 7.1711e-14 & 3.7939e-12 & 2.0983e-16
     \\
       & $ 10^{-4} $ &  3.0805e-11 & 2.5107e-12 & 3.7992e-14    \\
        & $ 10^{-8} $ &   5.9456e-09 & 1.2178e-12 & 7.9989e-12    \\
      \hline
    \end{tabular}
\end{table}

\begin{table}[!htpb]
  \caption{Comparisons between $\mu_1$ and $\bar\mu$, where the vector $b$ is generated by \textsc{Matlab} command {\sf randn} }
  \label{tab:b2}
  \centering
   \begin{tabular}{ccccc}
   \hline
     $\varepsilon$& $ \delta $ & $ \gamma$  & $ \mu_{1} $ & $ \bar{\mu} $\\
      \hline
    $10^{-7}$ & $ 10^{-1} $ &   7.5362e-16 & 3.3461e-07 & 1.0935e-08   \\
      & $ 10^{-4} $ &     4.0348e-13 & 1.8978e-07 & 9.1348e-09    \\
       & $ 10^{-8} $ &    2.9291e-08 & 1.7096e-07 & 4.0896e-09       \\
     \hline
      $10^{-14}$ & $ 10^{-1} $ &   8.7788e-16 & 3.4058e-14 & 2.1264e-15      \\
       & $ 10^{-4} $ &  1.5979e-12 & 2.1033e-14 & 6.3998e-16   \\
        & $ 10^{-8} $ &   6.2215e-11 & 1.9163e-14 & 8.1922e-16    \\
      \hline
    \end{tabular}
\end{table}

\end{document}